\providecommand{\U}[1]{\protect\rule{.1in}{.1in}}
\newtheorem{theorem}{Theorem}
\newtheorem{corollary}[theorem]{Corollary}
\newtheorem{lemma}[theorem]{Lemma}
\newtheorem{proposition}[theorem]{Proposition}
\newtheorem{remark}[theorem]{Remark}
\newenvironment{proof}[1][Proof]{\noindent\textbf{#1.} }{\ \rule{0.5em}{0.5em}}
\begin{document}

\title{The Dirichlet problem for the minimal surface equation on unbounded helicoidal
domains of $\mathbb{R}^{m}$}
\author{Ari Aiolfi, Caroline Assmann, Jaime Ripoll }
\maketitle

\begin{abstract}
We consider a helicoidal group $G$ in $\mathbb{R}^{n+1}$ and unbounded
$G$-invariant $C^{2,\alpha}$-domains $\Omega\subset\mathbb{R}^{n+1}$ whose
helicoidal projections are exterior domains in $\mathbb{R}^{n}$, $n\geq2$. We
show that for all $s\in\mathbb{R}$, there exists a $G$-invariant solution
$u_{s}\in C^{2,\alpha}\left(  \overline{\Omega}\right)  $ of the Dirichlet
problem for the minimal surface equation with zero boundary data which
satisfies $\sup_{\partial\Omega}\left\vert \operatorname{grad}u_{s}\right\vert
=\left\vert s\right\vert $. Additionally, we provide further information on
the behavior of these solutions at infinity.

\end{abstract}

\section{Introduction}

The Dirichlet problem for the minimal surface equation (mse) in $\mathbb{R}%
^{m}$, $m\geq2$, namely,%
\begin{equation}
\left\{
\begin{array}
[c]{c}%
\mathcal{M}\left(  u\right)  :=\operatorname{div}\left(  \tfrac
{\operatorname{grad}u}{\sqrt{1+\left\vert \operatorname{grad}u\right\vert
^{2}}}\right)  =0\text{ in }\Omega,~u\in C^{2}\left(  \Omega\right)  \cap
C^{0}\left(  \overline{\Omega}\right) \\
u|_{\partial\Omega}=\varphi
\end{array}
\right.  \label{PD}%
\end{equation}
where $\Omega\subset\mathbb{R}^{m}$ is a $C^{2}$-domain and $\varphi\in
C^{0}\left(  \partial\Omega\right)  $ is given a priori, has been intensively
explored in the last decades. One of the most general answers to the Dirichlet
problem (\ref{PD}) for bounded domain was given by H. Jenkins and J. Serrin in
\cite{JS}. They showed that (\ref{PD}) is solvable for arbitrary $\varphi\in
C^{0}\left(  \partial\Omega\right)  $ if only if $\Omega$ is mean convex.
Moreover, they noted that if $\varphi\in C^{2}\left(  \partial\Omega\right)
$, a bound on the oscillation of $\varphi$ in terms of the second order norm
of $\varphi$ should be enough to ensure the solvability of (\ref{PD}) on
arbitrary bounded domains (Theorem 2 of \cite{JS}).

The study of the Dirichlet problem for the mse on unbounded domains began with
J. C. C. Nitsche in the so called \emph{exterior domains }that is, when
$\mathbb{R}^{m}-\Omega$ is relatively compact (Section 4 of \cite{N}). Since
then several authors continue the investigation of the Dirichlet problem for
the mse in exterior domains (\cite{O}, \cite{Kr}, \cite{KT}, \cite{CK}%
,\cite{K} \cite{R}, \cite{RT} \cite{ABR}).

The Dirichlet problem (\ref{PD}) for more general unbounded domains reduces,
to authors knowledge, to few works: when $m=2$, Rosenberg and Sa Earp
(\cite{ERo}) proved that (\ref{PD}) has a solution if $\Omega\subset
\mathbb{R}^{2}$ is convex subset distinct of a half plane for any continuous
boundary data $\varphi$. In the half plane case, in the case that $\varphi$ is
bounded, Collin and Krust (\cite{CK}) proved that if $\Omega$ is a convex set
distinct of a half plane, then the solution is unique and, if $\Omega$ is a
half plane then there is a unique solution with linear growth. For an
arbitrary dimension $m$, Z. Jin, in \cite{J}, proved that (\ref{PD}) has a
solution if $\Omega$ is a mean convex domain contained in some special like
parabola-shape region or in the complement of a cone in $\mathbb{R}^{m}%
$.\ More recently N. Edelen and Z. Wang proved that if $\Omega\varsubsetneq
\mathbb{R}^{n}$ is an open convex domain (e.g. a half-space) and $\varphi\in
C^{0}\left(  \partial\Omega\right)  $ is a linear function, then any solution
of (\ref{PD}) must also be linear.

In our work we obtain an extension of the exterior Dirichlet problem for the
minimal surface equation in $\mathbb{R}^{m}$, $m\geq3,$ in the following
sense:\ we say that a domain is $k-$bounded, $0\leq k\leq m$, if it is bounded
in $k$ directions of the space $\mathbb{R}^{m}$ (as a direction we mean an
equivalence class of parallel lines of $\mathbb{R}^{m}$). As so, a domain of
$\mathbb{R}^{m}$ is relatively compact if and only if it is $m-$bounded. In
our main results we study the Dirichlet problem for the mse on certain domains
$\Omega$ of $\mathbb{R}^{m}$, $m\geq3,$ which complement $\mathbb{R}%
^{m}-\Omega$ is $(m-1)-$bounded, and for zero boundary data. We recall that
Theorem 3.5 of \cite{CK} proves the existence of solutions of the Dirichlet
problem for the mse on a strip, a special $1-$bounded domain of $\mathbb{R}%
^{2}$, for arbitrary continuous bounded boundary data.

To state precisely our theorems we need to recall a result of the third author
and F. Tomi (Theorem 2 of \cite{RT}) which asserts that if $\Omega$
is\ $G$-invariant $C^{2,\alpha}$-domain for $m\geq3$, where $G$ is a subgroup
of $ISO\left(  \mathbb{R}^{m}\right)  $ that acts freely and properly on
$\mathbb{R}^{m}$, such that $P\left(  \Omega\right)  $ is a bounded and mean
convex domain, then (\ref{PD}) has an unique $G$-invariant solution for any
$G-$invariant boundary data $\varphi\in C^{2,\alpha}\left(  \partial
\Omega\right)  $, where $P:\mathbb{R}^{m}\longrightarrow\mathbb{R}^{m}/G$ is
the projection through the orbits of $G$ and $\mathbb{R}^{m}/G$ is endowed
with a metric such that $P$ becomes a Riemannian submersion.

Related to the above result, we would like to mention that the use of a Lie
group of symmetries to study minimal surfaces was first considered by Wu-ye
Hsiang and Blaine Lawson in \cite{HL}. Although proving distinct facts, we can
say that Proposition 3 of \cite{RT} has the same spirit of Theorem 2 of
\cite{HL}. Also related to these results, there is the idea of using Lie
groups of symmetries to the study of minimal graphs (constant mean curvature
graphs and more general PDE's too), as Killing graphs in warped products. This
technique was first considered by Marcos Dajczer and the third author of this
work in \cite{DR} \ and, since then, many works have been done extending and
generalizing the results of \cite{DR}, as \cite{DHL}, \cite{DL} and
\cite{CHHL}.

Let $\lambda\in\mathbb{R}$, $a\in\mathbb{R-\{}0\}$ and $i,j,k\in\left\{
1,...,n+1\right\}  $ be given with any two $i,j$ and $k$ distinct. Consider
the helicoidal\ like group $G\equiv G_{\lambda,a}^{i,j,k}$ in $\mathbb{R}%
^{n+1}$, $n\geq2$, determined by the one parameter subgroup of isometries
$G=\left\{  \varphi_{t}\right\}  _{t\in\mathbb{R}}$, where $\varphi
_{t}:\mathbb{R}^{n+1}\longrightarrow\mathbb{R}^{n+1}$ is given by%
\begin{equation}
\varphi_{t}\left(  p\right)  =\alpha\left(  p\right)  e_{i}+\beta\left(
p\right)  e_{j}+\gamma\left(  p\right)  e_{k}+%
{\textstyle\sum\limits_{l\neq i,j,k}}
x_{l}e_{l}, \label{G}%
\end{equation}
where $p=\left(  x_{1},...,x_{n+1}\right)  $, $\left\{  e_{i}\right\}
_{i=1}^{n+1}$ is usual orthonormal basis of $\mathbb{R}^{n+1}$,
\[
\alpha\left(  p\right)  =x_{i}\cos\lambda t+x_{j}\sin\lambda t\text{, }%
\beta\left(  p\right)  =x_{j}\cos\lambda t-x_{i}\sin\lambda t
\]
and $\gamma\left(  p\right)  =x_{k}+at$.

Let $\pi:\mathbb{R}^{n+1}\longrightarrow\left\{  x_{k}=0\right\}
\equiv\mathbb{R}^{n}$ be the \emph{helicoidal} projection determined by $G$,
that is,%
\begin{equation}
\pi\left(  p\right)  =\widehat{\alpha}\left(  p\right)  e_{i}+\widehat{\beta
}\left(  p\right)  e_{j}+%
{\textstyle\sum\limits_{l\neq i,j,k}}
x_{l}e_{l}, \label{hp}%
\end{equation}
where
\[
\widehat{\alpha}\left(  p\right)  =x_{i}\cos\frac{\lambda x_{k}}{a}-x_{j}%
\sin\frac{\lambda x_{k}}{a}\text{, }\widehat{\beta}\left(  p\right)
=x_{j}\cos\frac{\lambda x_{k}}{a}+x_{i}\sin\frac{\lambda x_{k}}{a}.
\]

Set
\begin{equation}
M:=(\mathbb{R}^{n},\left\langle ,\right\rangle _{G}), \label{M}%
\end{equation}
where $\left\langle \text{ },\text{ }\right\rangle _{G}$ is the metric such
that $\pi$ becomes a Riemannian submersion (clearly $G$ acts freely and
properly in $\mathbb{R}^{n+1}$ and $\left\{  x_{k}=0\right\}  \equiv
\mathbb{R}^{n}$ is a slice relatively to the orbits $Gp=\left\{  \varphi
_{t}\left(  p\right)  \text{, }t\in\mathbb{R}\right\}  $). One may see that
the map $\psi:\mathbb{R}^{n+1}/G\rightarrow\mathbb{R}^{n}$ given by $\psi
=\pi\circ P^{-1}$ is well defined and is an isometry with the metrics
mentioned above. From the isometric submersion theory, it follows that $M$ is complete.

Let $\Omega\subset\mathbb{R}^{n+1}$ be a $G$-invariant domain of class
$C^{2,\alpha}$ and set $\Lambda=\pi\left(  \Omega\right)  $. Let $d_{E}\left(
p\right)  =d_{E}\left(  p,\partial\Omega\right)  $, $p\in\Omega$, be the
(Euclidean) distance in $\mathbb{R}^{n+1}$ to $\partial\Omega$ and let
$d\left(  q\right)  =d_{M}\left(  q,\partial\Lambda\right)  $, $q\in\Lambda$,
be the distance in $M$ to $\partial\Lambda$. Given $u\in C^{2}\left(
\Omega\right)  \cap C^{0}\left(  \overline{\Omega}\right)  $ and $\varphi\in
C^{0}\left(  \partial\Omega\right)  ,$ $G$-invariant functions, that is,
$u=v\circ\pi$ for some $v\in C^{2}\left(  \Lambda\right)  \cap C^{0}\left(
\overline{\Lambda}\right)  $ and $\varphi=\psi\circ\pi$ for some $\psi\in
C^{0}\left(  \partial\Lambda\right)  $, it follows from Proposition 3 of
\cite{RT} that $u$ is solution of (\ref{PD}) (relatively to $m=n+1$) if, and
only if,%

\begin{equation}
\left\{
\begin{array}
[c]{c}%
\mathfrak{M}\left(  v\right)  :=\operatorname{div}_{M}\left(  \tfrac{\nabla
v}{\sqrt{1+\left\vert \nabla v\right\vert ^{2}}}\right)  -\tfrac{1}%
{\sqrt{1+\left\vert \nabla v\right\vert ^{2}}}\left\langle \nabla
v,J\right\rangle _{M}=0\text{ on }\Lambda\\
v|_{\partial\Lambda}=\psi
\end{array}
\right.  \label{PDM}%
\end{equation}
where $\nabla$ and $\operatorname{div}_{M}$ are the gradient and divergence in
$M$, respectively, and
\begin{equation}
J\left(  \pi\left(  p\right)  \right)  =d\pi_{p}\left(  \overrightarrow
{H}_{Gp}\left(  p\right)  \right)  ,\text{ }p\in\mathbb{R}^{n+1}, \label{J}%
\end{equation}
where $\overrightarrow{H}_{Gp}$ is the mean curvature vector of the
$1$-dimensional submanifold $Gp$ of $\mathbb{R}^{n+1}$. Moreover, $\left\vert
\overline{\nabla}u\right\vert =\left\vert \nabla v\right\vert \circ\pi$, where
$\overline{\nabla}$ denotes the gradient in $\mathbb{R}^{n+1}$.
\begin{figure}[h]
\centering
\fbox{\includegraphics[scale=0.7]{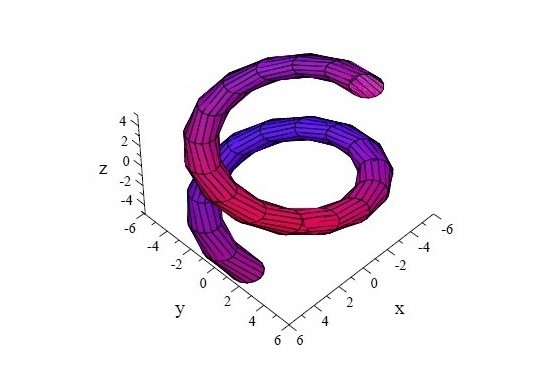}}\caption{A $G$-invariant
domain in $\mathbb{R}^{3}$}%
\end{figure}\newline

(\footnote{The white region in Figure 1 is a $G$-invariant domain
$\Omega\subset\mathbb{R}^{3}$ (with $\lambda=a=1$), whose boundary is the
surface $\Psi:\left[  0,\pi\right]  \times\mathbb{R\longrightarrow R}^{3}$
given by%
\[
\Psi\left(  t,z\right)  =\left(  \cos t\cos z+\left(  5+\sin t\right)  \sin
z,\left(  5+\sin t\right)  \cos z-\cos t\sin z,z\right)  .
\]
\par
Note that $\mathbb{R}^{3}-\overline{\Omega}$ is an example of a $2$-bounded
domain in $\mathbb{R}^{3}$.}) When $\Lambda$ is a bounded and mean convex
domain is proved in \cite{RT}, as mentioned above, that there is an unique
$G$-invariant solution $u\in C^{2,\alpha}\left(  \overline{\Omega}\right)  $
of (\ref{PD}) for all $G$-invariant $\varphi\in C^{2,\alpha}\left(
\partial\Omega\right)  $. In this note we work with the case where
$\Lambda=\pi\left(  \Omega\right)  $ is an exterior domain in $\mathbb{R}^{n}$
and the boundary data is zero. We observe that $\Lambda$ is an exterior domain
in $M$ if and only if $\Omega$ is $n-$bounded.

Regarding the condition of zero boundary data, we recall an old but quite
suggestive result to our work of Osserman that proves that in $\mathbb{R}^{2}$
with the Euclidean metric, there is a boundary data on a disk $D$ for which
there is no solution to the exterior Dirichlet problem in $\mathbb{R}^{2}-D$.
This strongly suggests, since $K_{M}>0$, that the zero boundary data condition
can not also be dropped out in our case. But we don't have a counter example.

In order to state our main results, we remember that, relatively to an
exterior domain $\mathbb{R}^{n}-\overline{\mathfrak{B}}_{\rho}\left(
p_{0}\right)  $ in $\mathbb{R}^{n}$, $n\geq2$, where $\mathfrak{B}_{\rho
}\left(  p_{0}\right)  $ is an open ball of $\mathbb{R}^{n}$ centered at
$p_{0}\in\mathbb{R}^{n}$ and of radius $\rho>0$, the function
\begin{equation}
v_{\rho}\left(  p\right)  :=\rho\int_{1}^{\frac{\tau}{\rho}}\frac{dt}%
{\sqrt{t^{2\left(  n-1\right)  }-1}}\text{, }\tau=\left\vert p-p_{0}%
\right\vert \text{, }p\in\mathbb{R}^{n}-\mathfrak{B}_{\rho}\left(
p_{0}\right)  , \label{ncat}%
\end{equation}
is the solution relatively to the Dirichlet problem (\ref{PD}) which
satisfies
\[
\underset{p\rightarrow\partial\mathfrak{B}_{\rho}\left(  p_{0}\right)  }{\lim
}\left\vert \overline{\nabla}v_{\rho}\left(  p\right)  \right\vert
=\infty\text{, }\underset{\left\vert p\right\vert \rightarrow\infty}{\lim
}\left\vert \overline{\nabla}v_{\rho}\left(  p\right)  \right\vert =0\text{ }%
\]
(a half-catenoid). If $n\geq3$, $v_{\rho}$ is bounded and its height at
infinity, which we denote by $h\left(  n,\rho\right)  $, is
\begin{equation}
h\left(  n,\rho\right)  =\rho h\left(  n,1\right)  =\rho\int_{1}^{\infty}%
\frac{dt}{\sqrt{t^{2\left(  n-1\right)  }-1}}. \label{hro}%
\end{equation}

In all of the results from now on, $G\equiv G_{\lambda,a}^{i,j,k}$ is as
defined in (\ref{G}), with $\lambda,a,i,j,k$ fixed.

We prove:

\begin{theorem}
\label{MT}Let $\Omega$ $\subset\mathbb{R}^{n+1}$, $n\geq2$, be a $G$-invariant
and $C^{2,\alpha}$-domain such that $\pi\left(  \Omega\right)  $ is an
exterior domain of $\mathbb{R}^{n}=\left\{  x_{k}=0\right\}  $, where $G$ and
$\pi$ are as defined in (\ref{G}) and (\ref{hp}), respectively. Let
$\varrho>0$ be the radius of smallest geodesic ball of $\mathbb{R}^{n}$ which
contain $\partial\pi\left(  \Omega\right)  $, centered at origin of
$\mathbb{R}^{n}$ if $\lambda\neq0$. Then, given $s\in\mathbb{R}$, there is a
$G$-invariant solution $u_{s}\in C^{2,\alpha}\left(  \overline{\Omega}\right)
$ of the Dirichlet problem (\ref{PD}) with $u_{s}|_{\partial\Omega}=0$, such
that:\newline i) $\sup_{\overline{\Omega}}\left\vert \overline{\nabla}%
u_{s}\right\vert =\sup_{\partial\Omega}\left\vert \overline{\nabla}%
u_{s}\right\vert =\left\vert s\right\vert $;\newline ii) $u_{s}$ is unbounded
if $n=2$ and $s\neq0$ and, if $n\geq3$, either
\[
\sup\left\vert u_{s}\right\vert \leq h\left(  n,\varrho\right)
\]
or there is a complete, non-compact, properly embedded $n$-dimensional
submanifold $N\subset\Omega$, such that
\[
\underset{d_{E}\left(  p\right)  \rightarrow+\infty}{\lim}u_{s}\left\vert
_{N}\right.  \left(  p\right)  =h\left(  n,\varrho\right)  ,
\]
where $h\left(  n,\varrho\right)  $ is given by (\ref{hro});\newline iii)
$u_{s}$ satisfies%
\[
\underset{d_{E}\left(  p\right)  \rightarrow\infty}{\lim}\left\vert
\overline{\nabla}u_{s}\left(  p\right)  \right\vert =0
\]
if $\lambda=0$ or $s=0$ or, if $\lambda\neq0$, $3\leq n\leq6$ and $u_{s}$is bounded.
\end{theorem}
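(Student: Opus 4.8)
The plan is to solve everything through the reduced equation $\mathfrak{M}(v)=0$ on the exterior domain $\Lambda=\pi(\Omega)\subset M$, since by Proposition~3 of \cite{RT} a $G$-invariant $u=v\circ\pi$ solves (\ref{PD}) if and only if $v$ solves (\ref{PDM}), with $|\overline{\nabla}u|=|\nabla v|\circ\pi$; as $\mathfrak{M}(-v)=-\mathfrak{M}(v)$, I may assume $s\geq0$. I would build $v_s$ by exhaustion: for large $R$ solve the Dirichlet problem for $\mathfrak{M}(v)=0$ on the bounded annular domain $\Lambda_R=\Lambda\cap B_R$, where $B_R$ is the geodesic ball of $M$ (centred at the origin when $\lambda\neq0$), imposing $v=0$ on $\partial\Lambda$ and a constant on $\partial B_R$. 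Solvability on each $\Lambda_R$ follows by the Perron method once barriers are exhibited: $v\equiv0$ serves as a lower barrier and a half-catenoid (a radial solution of (\ref{PDM}) with its singular circle placed inside the hole) as an upper barrier at $\partial\Lambda$, while the mean convexity of the geodesic sphere $\partial B_R$ for large $R$, a consequence of $K_M>0$, furnishes barriers there for the constant outer data.

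To fix the boundary gradient at $s$, I would let the outer constant $c$ vary and track $\sigma_R(c):=\max_{\partial\Lambda}|\nabla v_{R,c}|$. By the comparison principle this is continuous and nondecreasing, running from $0$ at $c=0$ up to $+\infty$ as $c$ approaches the limiting height of the half-catenoid, whose inner slope blows up; the intermediate value theorem then selects $c_R$ with $\sigma_R(c_R)=s$. The decisive point is that $u_{s,R}=v_{R,c_R}\circ\pi$ is a genuine minimal graph in $\mathbb{R}^{n+2}$, so the angle function $1/\sqrt{1+|\overline{\nabla}u_{s,R}|^{2}}=\langle N,e_{n+2}\rangle$ is a positive Jacobi field on the graph and obeys the maximum principle; hence $|\nabla v_{R,c_R}|$ attains its maximum on $\partial\Lambda_R$. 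Using the catenoid barrier to keep the gradient on $\partial B_R$ below $s$, the global maximum is attained on $\partial\Lambda$ with value $s$, \emph{uniformly in $R$}. This uniform bound, with interior and (via the barriers) boundary Schauder estimates, lets me pass to a subsequence $v_{R,c_R}\to v_s$ in $C^{2,\alpha}_{\mathrm{loc}}(\Lambda)$, yielding a solution on all of $\Lambda$; letting the maximum principle pass to the limit gives (i).

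For (ii) I would compare $v_s$ with the Euclidean half-catenoid $v_\varrho$ of (\ref{ncat}). For $n\geq3$ this barrier is bounded with limiting height $h(n,\varrho)$, and I expect the sign of the zeroth-order term $-\langle\nabla v,J\rangle/\sqrt{1+|\nabla v|^2}$ together with $K_M>0$ to make $v_\varrho$ a supersolution of $\mathfrak{M}$ outside $B_\varrho$; the comparison principle then yields either the bound $\sup|u_s|\leq h(n,\varrho)$, or, in the borderline situation where equality is forced only along certain directions, convergence $u_s\to h(n,\varrho)$ along the lift $N=\pi^{-1}(\Sigma)$ of the hypersurface $\Sigma\subset\Lambda$ where $v_s$ realises the catenoid's limiting height, producing the stated complete, properly embedded $n$-submanifold. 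For (iii) the case $\lambda=0$ has $J\equiv0$, reducing (\ref{PDM}) to the classical exterior minimal surface equation in Euclidean $\mathbb{R}^n$, for which catenoid comparison forces $|\overline{\nabla}u_s|\to0$; the case $s=0$ is trivial since then $u_s\equiv0$. In the remaining case ($\lambda\neq0$, $3\leq n\leq6$, $u_s$ bounded) I would argue by blow-down at infinity: rescaling the bounded minimal graph about points $p_j\to\infty$ produces, in the limit, an entire minimal graph over $\mathbb{R}^{n+1}$, which by the Bernstein theorem of Bombieri--De Giorgi--Giusti is affine precisely because the domain dimension satisfies $n+1\leq7$; boundedness then forces the limit to be constant, contradicting any nonvanishing of the gradient at infinity.

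The principal obstacle is the barrier analysis for the modified operator $\mathfrak{M}$ on the non-flat quotient $M$ when $\lambda\neq0$: explicitly producing the radial helicoidal catenoids by integrating the associated ODE, verifying the correct sub/supersolution inequalities, and extracting exactly the Euclidean height $h(n,\varrho)$ together with the dichotomy in (ii). The second delicate point is the blow-down in (iii), which requires enough uniform control on the bounded minimal graph for the rescaled limits to be entire graphs, so that the sharp dimensional threshold $n\leq6$ of the Bernstein theorem can be invoked.
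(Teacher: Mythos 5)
Your overall architecture coincides with the paper's (reduction to $\mathfrak{M}(v)=0$ on $\Lambda=\pi(\Omega)$ via Proposition 3 of \cite{RT}, exhaustion by annuli $\Lambda\cap\mathfrak{B}_k$ with the outer constant calibrated so that the gradient saturates the value $s$ on $\partial\Lambda$ --- your intermediate-value selection of $c_R$ is essentially the paper's $t_k=\sup T_k$ from (\ref{Tk}), borrowed from \cite{ABR} --- then catenoid comparison for (ii) and a compactness-plus-Bernstein argument for (iii)). But part (ii) as you describe it has a genuine gap. You cannot ``apply the comparison principle'' between $\vartheta_s$ and $v_\varrho$ on $\Lambda_\varrho=M-\overline{\mathfrak{B}_\varrho(0)}$: the sphere $\partial\mathfrak{B}_\varrho$ meets the \emph{interior} of $U$, where $\vartheta_s$ may well be positive while $v_\varrho=0$, so the required boundary inequality fails and no global ordering follows. (Incidentally, you need not guess that $v_\varrho$ is a supersolution from the sign of the $J$-term: Proposition \ref{CA} shows it is an \emph{exact} solution of (\ref{PDM}), because its $G$-invariant lift is the standard half-catenoid graph in $\mathbb{R}^{n+2}$.) The mechanism the paper uses, and which your sketch is missing, is local: since $\mathfrak{B}_\varrho$ is the \emph{smallest} ball containing $\partial U$, there is a common point $q_0\in\partial U\cap\partial\mathfrak{B}_\varrho$ where both functions vanish; there $|\nabla v_\varrho|=+\infty$ while $|\nabla\vartheta_s|\le s$, so $\vartheta_s<v_\varrho$ on some component $V_{q_0}$ of $\{\vartheta_s<v_\varrho\}$ with $q_0\in\partial V_{q_0}$. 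Uniqueness of the Dirichlet problem on bounded domains forces $V_{q_0}$ to be unbounded, and the dichotomy in (ii) is exactly the dichotomy ``$\partial V_{q_0}$ bounded (hence $V_{q_0}=\Lambda_\varrho$ and $\sup\vartheta_s\le h(n,\varrho)$)'' versus ``$\partial V_{q_0}$ unbounded (take $\Sigma=\partial V_{q_0}$, on which $\vartheta_s=v_\varrho\to h(n,\varrho)$)''. This is not a ``borderline case where equality is forced''; it is the generic outcome of the failure of direct comparison, and without the $V_{q_0}$ argument neither branch of (ii), nor the unboundedness for $n=2$, is established.

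In (iii) your argument is pointed the right way but the ``blow-down'' is the wrong normalization: rescaling a \emph{bounded} graph sends the function to zero and you can no longer run ``affine by Bernstein, hence constant by boundedness''. The paper instead \emph{translates}: $u_n(p)=u(p+p_n)$ on $\Omega_n=\Omega-p_n$, checks that the $\Omega_n$ exhaust $\mathbb{R}^{n+1}$ (using compactness of $\pi(\mathbb{R}^{n+1}-\Omega)$), and extracts a bounded entire solution of the minimal surface equation on $\mathbb{R}^{m}$, $m=n+1\in[4,7]$, which is constant by Simons' extension of the Bernstein theorem \cite{S} (Bombieri--De Giorgi--Giusti \cite{BGG} is the \emph{counterexample} for $m\ge8$, not the positive result), contradicting $|\overline{\nabla}u_n(0)|\ge\varepsilon$. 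With the translation in place of the rescaling, and the tangency-point comparison supplied in (ii), your outline matches the paper's proof of Proposition \ref{RP}, from which Theorem \ref{MT} follows.
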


Additional informations relatively to set of $G$-invariant solutions of the
Dirichlet problem (\ref{PD}) also are obtained under the assumption that
$M-\nolinebreak\pi\left(  \Omega\right)  $ satisfies the interior sphere
condition of radius $r>0$, that is, for each $q\in\nolinebreak\partial
\pi\left(  \Omega\right)  $, there is a geodesic sphere $S_{q}$ of $M$ of
radius $r$ contained in $M-\pi\left(  \Omega\right)  $ such that $S_{q}$ is
tangent to $\partial\pi\left(  \Omega\right)  $ at $q$ and $r$ is maximal with
this property.

Given $a\in\mathbb{R}- \left\{  0\right\}  $, $\lambda\in\mathbb{R} $,
$n\geq3$ and $r>0$ set
\begin{equation}
C=C\left(  r,n,\lambda,a\right)  :=\frac{2\left\vert a\right\vert \left(
n-1\right)  +\left\vert \lambda\right\vert r}{2\left\vert a\right\vert r}.
\label{C}%
\end{equation}
Let $\varsigma>C$ be the solution of the equation%
\begin{equation}
\cosh\left(  \frac{\mu}{\sqrt{\mu^{2}-C^{2}}}\right)  =\frac{\mu}{C}\text{,
}\mu>C, \label{eqmi}%
\end{equation}
and set%
\begin{equation}%
\mathcal{L}%
=%
\mathcal{L}%
\left(  r,n,\lambda,a\right)  :=\left\{
\begin{array}
[c]{c}%
\frac{1}{\sqrt{\varsigma^{2}-C^{2}}}\text{ if }\lambda\neq0\\
h\left(  n,r\right)  \text{ if }\lambda=0
\end{array}
\right.  , \label{L}%
\end{equation}
where $h\left(  n,r\right)  $ is given by (\ref{hro}).

\begin{theorem}
\label{CMT}Let $\Omega$ $\subset\mathbb{R}^{n+1}$, $n\geq3$, be a
$G$-invariant and $C^{2,\alpha}$-domain such that $\pi\left(  \Omega\right)  $
is an exterior domain of $\mathbb{R}^{n}=\left\{  x_{k}=0\right\}  $, where
$G$ and $\pi$ are as defined in (\ref{G}) and (\ref{hp}), respectively. Assume
that $M-\pi\left(  \Omega\right)  $ satisfies the interior sphere condition of
radius $r>0$, where $M$ is the $n$-dimensional Riemannian manifold given by
(\ref{M}). Let $%
\mathcal{L}%
=%
\mathcal{L}%
\left(  r,n,\lambda,a\right)  $ be as defined in (\ref{L}), where $\lambda$
and $a$ are given in the definition of $G$. Then, given $c\in\lbrack0,%
\mathcal{L}%
]$, there is a $G$-invariant solution $u_{c}\in C^{2}\left(  \Omega\right)
\cap C^{0}\left(  \overline{\Omega}\right)  $ of (\ref{PD}) with
$u_{c}|_{\partial\Omega}=0$, such that
\[
\underset{d_{E}\left(  p\right)  \rightarrow\infty}{\lim u_{c}\left(
p\right)  }=c.
\]
In particular, if $c\in\lbrack0,%
\mathcal{L}%
)$ then $u_{c}\in C^{2,\alpha}\left(  \overline{\Omega}\right)  $.
\end{theorem}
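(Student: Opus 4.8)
The plan is to obtain Theorem \ref{CMT} as a sharpening of Theorem \ref{MT} together with an explicit rotationally symmetric barrier on $M$ whose height at infinity is exactly $\mathcal{L}$. By Proposition 3 of \cite{RT}, writing $u=v\circ\pi$ turns the problem into solving $\mathfrak{M}(v)=0$ on $\Lambda=\pi(\Omega)$ with $v|_{\partial\Lambda}=0$, where $\mathfrak{M}$ is the drift operator in (\ref{PDM}). Theorem \ref{MT} already supplies, for each $s\ge 0$, a $G$-invariant solution $u_{s}=v_{s}\circ\pi$ with $u_{s}|_{\partial\Omega}=0$ and $\sup_{\overline{\Omega}}|\overline{\nabla}u_{s}|=s$; for $s=0$ part (i) forces $\overline{\nabla}u_{0}\equiv 0$, so $u_{0}\equiv 0$. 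First I would record that under the standing hypotheses the limit $c(s):=\lim_{d_{E}(p)\to\infty}u_{s}(p)$ exists and is finite, which follows from the barrier below and the asymptotic information in Theorem \ref{MT}. The theorem then reduces to showing that $s\mapsto c(s)$ is continuous and nondecreasing, that $c(0)=0$, and that $\sup_{s\ge 0}c(s)=\mathcal{L}$; the intermediate value theorem then produces, for each $c\in[0,\mathcal{L})$, a finite $s$ with $c(s)=c$, and $c=\mathcal{L}$ is realized as a limit.

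The core of the argument is the comparison barrier. Exploiting the rotational symmetry of $\langle\,,\,\rangle_{G}$ about the axis $\{x_{i}=x_{j}=0\}$, I would seek a radial solution $w=w(\tau)$ of $\mathfrak{M}(w)=0$ on the exterior $M-B_{r}$ of a geodesic ball of radius $r$, with $w(r)=0$ and infinite boundary slope, the analogue of the half-catenoid (\ref{ncat}). Inserting a radial ansatz into $\operatorname{div}_{M}\bigl(\nabla w/\sqrt{1+|\nabla w|^{2}}\bigr)-\langle\nabla w,J\rangle_{M}/\sqrt{1+|\nabla w|^{2}}=0$ and using the expression (\ref{J}) for $J$ in terms of the mean curvature of the helicoidal orbits collapses the equation to a first order ODE for $q(\tau)=w'(\tau)/\sqrt{1+w'(\tau)^{2}}$. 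Integrating gives the height $\int_{r}^{\infty}w'(\tau)\,d\tau$, and a careful estimate of this improper integral is precisely what produces the constant $C$ of (\ref{C}), the transcendental equation (\ref{eqmi}) for $\varsigma$, and finally the value $\mathcal{L}$ of (\ref{L}); in the degenerate case $\lambda=0$ one has $J\equiv 0$, $M$ is flat, and the integral reduces to the Euclidean catenoid height $h(n,r)$.

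With the barrier in hand, the interior sphere condition does the rest. For each $q\in\partial\Lambda$ there is a geodesic $r$-ball $B_{q}\subset M-\Lambda$ tangent at $q$; placing the radial barrier $w_{q}$ over $M-B_{q}$ gives a genuine solution of $\mathfrak{M}(w_{q})=0$ with $w_{q}\ge 0=v_{s}$ on $\partial\Lambda$ and infinite inward slope at $q$. The comparison principle for $\mathfrak{M}$, which still holds despite the drift term since the operator is of prescribed mean curvature type, yields $v_{s}\le w_{q}$ on $\Lambda$, whence $c(s)\le\mathcal{L}$; the same comparison on exhausting domains, together with the ordering of the values at infinity, gives monotonicity of $c(s)$. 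That $\sup_{s}c(s)=\mathcal{L}$ I would obtain by matching the barrier from below at an extremal boundary point and letting $s\to\infty$. Finally, the regularity dichotomy is governed by interior gradient estimates: for finite $s$ the bound $\sup_{\overline{\Omega}}|\overline{\nabla}u_{s}|=s$ gives $u_{s}\in C^{2,\alpha}(\overline{\Omega})$ by Schauder theory, so every $c\in[0,\mathcal{L})$ yields a $C^{2,\alpha}$ solution, while the limit $u_{\mathcal{L}}=\lim_{s\to\infty}u_{s}$ persists in $C^{2}(\Omega)\cap C^{0}(\overline{\Omega})$ but loses boundary regularity because its boundary slope blows up, explaining why $c=\mathcal{L}$ need not give a $C^{2,\alpha}$ solution.

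The step I expect to be genuinely delicate is the ODE analysis for $\lambda\neq 0$: isolating the sharp height $\mathcal{L}$ from the improper integral and verifying that the radial profile has vertical contact at $\tau=r$ together with a finite horizontal asymptote, which requires controlling the sign and growth of the drift contribution $\langle\nabla w,J\rangle_{M}$ along the profile. A secondary technical point is to make the comparison principle and the Bernstein-type interior gradient estimates uniform enough in $s$ to guarantee both the continuity of $c(s)$ and the existence of the limiting solution at $c=\mathcal{L}$.
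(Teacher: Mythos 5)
Your plan diverges fundamentally from the paper's, and it contains gaps that I do not see how to close. The central one is your very first step: you assert that $c(s):=\lim_{d_{E}(p)\to\infty}u_{s}(p)$ exists and is finite, ``which follows from the barrier below and the asymptotic information in Theorem \ref{MT}.'' It does not. Theorem \ref{MT} only gives the dichotomy $\sup|u_{s}|\leq h(n,\varrho)$ \emph{or} convergence to $h(n,\varrho)$ along one submanifold $N$, and the authors state explicitly at the end of the introduction that they were unable to prove the solutions $u_{s}$ have a limit at infinity. A supersolution barrier gives only an upper bound, not a limit. Without $c(s)$ being well defined, the rest of your scheme (continuity and monotonicity of $s\mapsto c(s)$, $\sup_{s}c(s)=\mathcal{L}$, intermediate value theorem) has nothing to act on; and even granting existence, surjectivity onto $[0,\mathcal{L}]$ is unsupported, since the height bound from Theorem \ref{MT} involves $h(n,\varrho)$ (circumradius of $\partial\pi(\Omega)$) while $\mathcal{L}$ is built from the interior-sphere radius $r$ --- these are unrelated constants. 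The paper avoids all of this by running a Perron argument directly at the prescribed height: $w_{c}=\sup\digamma$ over subsolutions $f$ with $f|_{\partial U}=0$ and $\limsup_{d\to\infty}f\leq c$, with a lower barrier given by a vertically translated and truncated half-catenoid $\max\{0,v_{\alpha}-(v_{\alpha}(\infty)-c)\}$ forcing the limit at infinity to equal $c$, and an upper barrier controlling boundary continuity.

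The second gap is your construction of the barrier itself. You propose to solve $\mathfrak{M}(w)=0$ exactly for a radial profile $w(\tau)$ on $M-B_{r}$ by reducing to a first-order ODE. But $(M,\langle\,,\rangle_{G})$ is not rotationally symmetric about an arbitrary point $o$ (the only symmetry is the $SO(2)$ rotation in the $(x_{i},x_{j})$-plane), so neither $\Delta d$ nor the drift term $\langle\nabla d,J\rangle_{M}$ is constant on geodesic spheres about $o$, and no such ODE reduction exists for the balls supplied by the interior sphere condition, which are centered at arbitrary points. This is precisely why the paper settles for a \emph{supersolution}: it takes $w=\psi\circ d$ with $\psi(t)=b^{-1}\cosh^{-1}(1+bt)$, bounds $\Delta d\leq(n-1)/r$ by Laplacian comparison (using $Ric_{M}\geq0$) and $\langle\nabla d,J\rangle_{M}\geq-|\lambda|/(2|a|)$ via Lemma \ref{nv}, obtaining $\mathfrak{M}(w)\leq0$ only on a collar $\{d\leq t_{0}\}$; the constants $C$, the equation (\ref{eqmi}) for $\varsigma$, and $\mathcal{L}=(\varsigma^{2}-C^{2})^{-1/2}$ arise from optimizing the height of this collar barrier over $b$, not from integrating a profile out to infinity. (The exact radial solution $v_{\rho}$ of Proposition \ref{CA} exists only for balls centered at the origin, because the Euclidean catenoid happens to be $G$-invariant there.) Finally, the $C^{2,\alpha}$ regularity for $c<\mathcal{L}$ in the paper is not a consequence of a global gradient bound plus Schauder theory; it requires a sliding-and-translating barrier argument producing a genuine solution touching $w_{c}$ at each boundary point, which your outline does not address.
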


We note that our approach is not applicable for general boundary data.
Moreover, we were not able to prove that the solutions $u_{s}$ obtained in
The\nolinebreak orem \ref{MT} have a limit at infinity and this could be the
subject of a future research.

\section{Preliminaries}

We first observe that, relatively to the PDE given in (\ref{PDM}), the maximum
and comparison principles work (see Section 3 of \cite{RT}).

In this section, we give further informations on $M$ and we provide the basic
results to construct barriers relative to the Dirichlet problem (\ref{PDM})
when the boundary data is zero. We shall use the meaning of the indexes $i$
and $j$ as defined in (\ref{G}).

\begin{lemma}
\label{CGp}Let $G$ be as defined in (\ref{G}). Given $p=\left(  x_{1}%
,...,x_{n+1}\right)  \in\mathbb{R}^{n+1}$, the orbit $Gp$ has constant
curvature
\begin{equation}
H_{Gp}=\frac{\lambda^{2}\sqrt{x_{i}^{2}+x_{j}^{2}}}{\lambda^{2}(x_{i}%
^{2}+x_{j}^{2})+a^{2}}. \label{HG}%
\end{equation}
In particular,
\begin{equation}
\underset{p\in\mathbb{R}^{n+1}}{\sup}H_{Gp}=\frac{\left\vert \lambda
\right\vert }{2\left\vert a\right\vert }, \label{sHG}%
\end{equation}
and the supremum is attended, if $\lambda\neq0$, at those orbits through the
points $p$ in $\mathbb{R}^{n+1}$ such that $\left\vert \left(  x_{i}%
,x_{j}\right)  \right\vert =\left\vert a\right\vert /\left\vert \lambda
\right\vert $.
\end{lemma}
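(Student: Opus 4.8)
The plan is to recognize that each orbit $Gp$ is a circular helix, apply the classical curvature formula to it, and then carry out an elementary one-variable optimization. First I would note that the coordinates $x_l$ with $l\neq i,j,k$ are fixed by every $\varphi_t$, so the orbit through $p=(x_1,\dots,x_{n+1})$ is contained in the affine $3$-plane through $p$ spanned by $e_i,e_j,e_k$ and is the image of the curve
\[
c(t)=(\alpha(p),\beta(p),\gamma(p))=(x_i\cos\lambda t+x_j\sin\lambda t,\ x_j\cos\lambda t-x_i\sin\lambda t,\ x_k+at).
\]
A direct check gives $\alpha(p)^2+\beta(p)^2=x_i^2+x_j^2=:r^2$ for every $t$, so writing $(x_i,x_j)=r(\cos\theta_0,\sin\theta_0)$ one obtains $c(t)=(r\cos(\theta_0-\lambda t),\,r\sin(\theta_0-\lambda t),\,x_k+at)$. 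Hence $Gp$ is a circular helix of radius $r=\sqrt{x_i^2+x_j^2}$, angular speed $|\lambda|$, and vertical speed $|a|$.

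Next, since $Gp$ is $1$-dimensional, its mean curvature vector coincides with its curvature vector, so $H_{Gp}$ is simply the Euclidean curvature of $c$. I would compute this with the parametrization-independent formula $\kappa=|c'\times c''|/|c'|^{3}$, which applies even though $c$ is not unit-speed. A short computation yields $|c'|^2=\lambda^2 r^2+a^2$ and $|c'\times c''|=\lambda^2 r\sqrt{\lambda^2 r^2+a^2}$, whence
\[
H_{Gp}=\frac{\lambda^2 r}{\lambda^2 r^2+a^2}=\frac{\lambda^2\sqrt{x_i^2+x_j^2}}{\lambda^2(x_i^2+x_j^2)+a^2},
\]
which is exactly (\ref{HG}). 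As this value does not depend on $t$, the curvature is constant along the orbit, proving the first assertion.

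Finally, for the supremum I would view $H_{Gp}=f(r):=\lambda^2 r/(\lambda^2 r^2+a^2)$ as a function of $r\ge 0$. If $\lambda=0$ then $f\equiv 0=|\lambda|/(2|a|)$. If $\lambda\neq0$, the AM--GM inequality gives $\lambda^2 r^2+a^2\ge 2|\lambda||a|\,r$, so that $f(r)\le |\lambda|/(2|a|)$ for all $r\ge 0$, with equality precisely when $\lambda^2 r^2=a^2$, i.e.\ $r=|a|/|\lambda|$. This establishes (\ref{sHG}) and simultaneously identifies the supremum as being attained exactly at the orbits through points with $|(x_i,x_j)|=|a|/|\lambda|$.

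I do not anticipate a genuine obstacle, as the whole statement is a direct computation; the only point requiring care is the curvature step, where one must use the general (non--arc-length) formula $\kappa=|c'\times c''|/|c'|^3$ and keep in mind that $H_{Gp}$ denotes the \emph{norm} of the mean curvature vector, so that the signs of $\lambda$ and $a$ enter only through $\lambda^2$ and $a^2$.
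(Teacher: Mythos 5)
Your proposal is correct and follows essentially the same route as the paper: both identify the orbit as a circular helix of radius $r=\sqrt{x_i^2+x_j^2}$, compute its Euclidean curvature (the paper via an explicit arc-length parametrization, you via the equivalent formula $|c'\times c''|/|c'|^3$), and then maximize $r\mapsto \lambda^2 r/(\lambda^2 r^2+a^2)$ (the paper by one-variable calculus, you by AM--GM). These are only cosmetic differences in an otherwise identical argument.
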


\begin{proof}
An arch length parametrization of $Gp$ is given by%
\[
\gamma_{p}(s)=F\left(  p\right)  e_{i}+G\left(  p\right)  e_{j}+H\left(
p\right)  e_{k}+\sum\limits_{l\neq i,j,k}x_{l}e_{l}.
\]
where
\[
F\left(  p\right)  =x_{i}\cos(A\left(  p\right)  s)+x_{j}\sin(A\left(
p\right)  s)\text{, }G\left(  p\right)  =x_{j}\cos(A\left(  p\right)
s)-x_{i}\sin(A\left(  p\right)  s)
\]
and%
\[
H\left(  p\right)  =x_{k}+\frac{aA\left(  p\right)  s}{\lambda},
\]
with%
\begin{equation}
A(p)=\frac{\lambda}{\sqrt{\lambda^{2}(x_{i}^{2}+x_{j}^{2})+a^{2}}}\text{.}
\label{A(p)}%
\end{equation}
The mean curvature vector of $Gp$ is then%
\begin{equation}
\overrightarrow{H}_{Gp}(\gamma_{p}(s))=\gamma_{p}^{\prime\prime}%
(s)=-A^{2}\left(  p\right)  \left[  F\left(  p\right)  e_{i}+G\left(
p\right)  e_{j}\right]  , \label{vcm}%
\end{equation}
and, consequently, the mean curvature of $Gp$ in $\mathbb{R}^{n+1}$ is given
by (\ref{HG}). Since the mean curvature of the orbit $Gp$ only depends on the
Euclidean distance of $x_{i}e_{i}+x_{j}e_{j}$ to the origin, setting
$\sigma\left(  p\right)  =\left\vert \left(  x_{i},x_{j}\right)  \right\vert
$, we have that $\xi:[0,\infty)\longrightarrow\nolinebreak\mathbb{R}$ given
by
\begin{equation}
\xi\left(  \sigma\right)  =\frac{\lambda^{2}\sigma}{\lambda^{2}\sigma
^{2}+a^{2}}, \label{csi}%
\end{equation}
has a maximal absolute point at $\sigma_{0}=\left\vert a\right\vert
/\left\vert \lambda\right\vert $ if $\lambda\neq0$ and the result follows.
\end{proof}

Since $\pi:\mathbb{R}^{n+1}\longrightarrow M$ is a Riemannian submersion,
given two orthogonal vector fields $X,Y\in\chi\left(  M\right)  $ and their
respective horizontal lift $\overline{X},\overline{Y}$, we know that%
\[
K_{M}\left(  X,Y\right)  =K_{\mathbb{R}^{n+1}}\left(  \overline{X}%
,\overline{Y}\right)  +\frac{3}{4}\left\vert \left[  \overline{X},\overline
{Y}\right]  ^{v}\right\vert _{\mathbb{R}^{n+1}}^{2}%
\]
where $K$ and $\left[  \overline{X},\overline{Y}\right]  ^{v}$ means,
respectively, the sectional curvature and the vertical component of $\left[
\overline{X},\overline{Y}\right]  $, that is, the component which is tangent
to the orbits $Gp$, $p\in\mathbb{R}^{n+1}$. As $K_{\mathbb{R}^{n+1}}\left(
\overline{X},\overline{Y}\right)  =0$, it follows that $K_{M}\left(
X,Y\right)  \geq0$ and, therefore, $Ric_{M}\geq0$ (straightforward, but quite
extensive calculations, give us that, in fact, $K_{M}>0$ with $K_{M}%
\rightarrow0$ at infinity).

\begin{lemma}
\label{nv}Let $\Lambda$ be an exterior domain in $M$. Denote by $\nu$ the
horizontal lift of $\nabla d$, where $d=d_{M}\left(  .,\partial\Lambda\right)
$. Then
\begin{equation}
\left\langle \nabla d,J\right\rangle _{M}\circ\pi=\left\langle \nu
,\overrightarrow{H}_{G}\right\rangle \label{hl}%
\end{equation}
where $\left\langle ,\right\rangle $ is the Euclidean metric and $J$ is given
by (\ref{J}). In particular
\[
-H_{Gp}\left(  p\right)  \leq\left\langle \nabla d,J\right\rangle _{M}\left(
\pi\left(  p\right)  \right)  ,
\]
for all $p\in\pi^{-1}\left(  \Lambda\right)  $, where $H_{Gp}$ is given by
(\ref{HG}).
\end{lemma}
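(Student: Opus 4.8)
The plan is to exploit that the fibers of the Riemannian submersion $\pi$ are exactly the orbits $Gp$, so that at each $p$ the horizontal subspace --- on which $d\pi_p$ restricts to a linear isometry onto $T_{\pi(p)}M$ --- coincides with the Euclidean-orthogonal complement of the tangent line to $Gp$. The first thing I would record is therefore that $\overrightarrow{H}_{Gp}(p)$ is \emph{horizontal}: being a mean curvature vector, it lies in the normal bundle of the orbit, and for a one-dimensional fiber the normal space is precisely the horizontal space. Concretely, this can be checked from the arc-length parametrization $\gamma_p$ of Lemma \ref{CGp}: with $\gamma_p'(s)=A(p)\left[G(p)e_i-F(p)e_j\right]+\tfrac{aA(p)}{\lambda}e_k$ and $\overrightarrow{H}_{Gp}=\gamma_p''=-A^2(p)\left[F(p)e_i+G(p)e_j\right]$ as in (\ref{vcm}), one has $\langle \overrightarrow{H}_{Gp},\gamma_p'\rangle=0$, so the mean curvature vector is orthogonal to the orbit direction, i.e.\ horizontal.

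With this in hand the identity (\ref{hl}) is immediate. By definition $d\pi_p(\nu(p))=\nabla d(\pi(p))$ (this is what it means for $\nu$ to be the horizontal lift of $\nabla d$) and $d\pi_p(\overrightarrow{H}_{Gp}(p))=J(\pi(p))$ by (\ref{J}). Since both $\nu(p)$ and $\overrightarrow{H}_{Gp}(p)$ are horizontal and $d\pi_p$ preserves the inner product of horizontal vectors, I would conclude
\[
\langle \nabla d,J\rangle_M(\pi(p))=\langle d\pi_p(\nu),\,d\pi_p(\overrightarrow{H}_{Gp})\rangle_M=\langle \nu,\overrightarrow{H}_{Gp}\rangle,
\]
the last being the Euclidean inner product, which is exactly (\ref{hl}).

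For the pointwise lower bound I would apply Cauchy--Schwarz to the right-hand side. First, $|\nu(p)|=|\nabla d(\pi(p))|_M=1$, because $d=d_M(\cdot,\partial\Lambda)$ is a Riemannian distance function (unit gradient wherever it is differentiable) and $d\pi_p$ is norm-preserving on horizontal vectors. Second, from (\ref{vcm}) and (\ref{A(p)}),
\[
|\overrightarrow{H}_{Gp}(p)|=A^2(p)\sqrt{x_i^2+x_j^2}=\frac{\lambda^2\sqrt{x_i^2+x_j^2}}{\lambda^2(x_i^2+x_j^2)+a^2}=H_{Gp}(p),
\]
the mean curvature of (\ref{HG}). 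Hence
\[
\langle \nabla d,J\rangle_M(\pi(p))=\langle \nu,\overrightarrow{H}_{Gp}\rangle\geq -|\nu|\,|\overrightarrow{H}_{Gp}|=-H_{Gp}(p),
\]
which is the stated inequality.

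I expect no serious obstacle here; the proof is essentially a bookkeeping exercise in submersion geometry once one sees that $\overrightarrow{H}_{Gp}$ is horizontal. The only points that warrant care are the identification of the fibers of $\pi$ with the orbits $Gp$ (which is what legitimizes ``horizontal $=$ orbit-normal'' and the use of the isometry property of $d\pi_p$), and the fact that $\nabla d$, and hence $\nu$, is defined only where $d$ is differentiable, i.e.\ away from the cut locus of $\partial\Lambda$; the identity and the inequality are to be read at such points, which is exactly where they will be needed in the subsequent barrier constructions.
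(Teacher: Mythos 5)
Your proof is correct and follows essentially the same route as the paper: push both identities through $d\pi_p$ using that it is a linear isometry on the horizontal space, then conclude the inequality from $|\nu|=1$ and $|\overrightarrow{H}_{Gp}|=H_{Gp}$ via Cauchy--Schwarz. The only difference is that you explicitly verify $\langle\overrightarrow{H}_{Gp},\gamma_p'\rangle=0$ (horizontality of the mean curvature vector), which the paper takes for granted when calling $\overrightarrow{H}_{G}$ the horizontal lift of $J$; that is a harmless and slightly more careful addition.
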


\begin{proof}
As $\overrightarrow{H}_{G}$ and $\nu$ are the horizontal lift of $J$ and
$\nabla d$ respectively, we have $J\left(  \pi\left(  p\right)  \right)
=d\pi_{p}\left(  \overrightarrow{H}_{Gp}\left(  p\right)  \right)  $ and
$\nabla d\left(  \pi\left(  p\right)  \right)  =d\pi_{p}\left(  \nu\left(
p\right)  \right)  $. Since $\pi$ is a Riemannian submersion,
\[
d\pi_{p}|_{\left[  T_{p}\mathbb{R}^{n+1}\right]  ^{h}}:\left[  T_{p}%
\mathbb{R}^{n+1}\right]  ^{h}\longrightarrow T_{\pi\left(  p\right)  }M
\]
is an isometry, where $\left[  T_{p}\mathbb{R}^{n+1}\right]  ^{h}$ means the
horizontal vector space relatively to $Gp$ at $p$ and, from this, we have
(\ref{hl}). In particular, $\left\langle \nu,\overrightarrow{H}_{Gp}%
\right\rangle $ is constant along $Gp$. Note that $\left\vert \nu\right\vert
=1$ since $\left\vert \nabla d\right\vert =1$. Thus
\[
-H_{Gp}\left(  p\right)  =-\left\vert \overrightarrow{H}_{Gp}\left(  p\right)
\right\vert \leq\left\langle \nu,\overrightarrow{H}_{Gp}\right\rangle \left(
p\right)
\]
and the result follows.
\end{proof}

\begin{proposition}
\label{P_ginf}Let $G,$ $\pi$ and $M=(\mathbb{R}^{n},\left\langle
,\right\rangle _{G})$ as defined in (\ref{G}), (\ref{hp}) and (\ref{M}),
respectively, and assume $n\geq3$ and $\lambda\neq0$. Let $o$ be an arbitrary
but fixed point of $M$ and let $\Lambda=M-\overline{B_{r}(o)}$, where
$B_{r}(o)$ is the open geodesic ball of $M$ of radius $r$ centered at $o$. Let
$b\in\mathbb{R}$ satisfying $b>C$, where $C$ is given by (\ref{C}). Consider
$\psi:[0,\infty)\longrightarrow\mathbb{R}$ given by%
\begin{equation}
\psi\left(  t\right)  =\frac{1}{b}\cosh^{-1}\left(  1+bt\right)
\label{psi_ginf}%
\end{equation}
and $\Lambda_{0}:=\left\{  q\in\Lambda;d\left(  q\right)  \leq t_{0}\right\}
$, where
\begin{equation}
t_{0}=\frac{b-C}{bC}. \label{t0_ginf}%
\end{equation}
Then $w=\psi\circ d:\Lambda_{0}\rightarrow\mathbb{R}$ is such that $w\in
C^{2}\left(  \Lambda_{0}\right)  \cap C^{0}\left(  \overline{\Lambda}%
_{0}\right)  $, $w|_{\partial\Lambda}=0$, $w>0$ on $\Lambda_{0}$,
\[
\underset{d\left(  q\right)  \rightarrow0}{\lim}\left\vert \nabla w\left(
q\right)  \right\vert =+\infty
\]
and $\mathfrak{M}\left(  w\right)  \leq0$ on $\Lambda_{0}$, where
$\mathfrak{M}$ is the operator defined in (\ref{PDM}).
\end{proposition}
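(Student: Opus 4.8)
The plan is to verify directly that $w=\psi\circ d$ is a supersolution of the operator $\mathfrak{M}$ on $\Lambda_{0}$; the function $\psi$ is reverse-engineered precisely so that the resulting differential inequality closes. Since $w$ depends on $d$ alone and $\left\vert \nabla d\right\vert =1$, I would first compute $\nabla w=\psi'(d)\nabla d$, so that $\left\vert \nabla w\right\vert =\psi'(d)$, and write the flux field as $\tfrac{\nabla w}{\sqrt{1+\left\vert \nabla w\right\vert ^{2}}}=f(d)\nabla d$ with $f(t)=\tfrac{\psi'(t)}{\sqrt{1+\psi'(t)^{2}}}$. Using $\operatorname{div}_{M}(f(d)\nabla d)=f'(d)\left\vert \nabla d\right\vert ^{2}+f(d)\,\Delta_{M}d$ and the minus sign in the $J$-term, this reduces the operator to the one-variable expression
\begin{equation}
\mathfrak{M}(w)=f'(d)+f(d)\left[  \Delta_{M}d-\left\langle \nabla d,J\right\rangle _{M}\right]  , \label{reduc}
\end{equation}
where $\Delta_{M}d=\operatorname{div}_{M}(\nabla d)$.

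Next I would bound the bracket in (\ref{reduc}) by the constant $C$ of (\ref{C}). Writing $d=\rho-r$, where $\rho=d_{M}(\cdot,o)$ is the distance to the center of the ball (so $\nabla d=\nabla\rho$ and $\Delta_{M}d=\Delta_{M}\rho$), the Laplacian comparison theorem, applicable because $Ric_{M}\geq0$ (as noted before the statement), gives $\Delta_{M}d\leq\tfrac{n-1}{\rho}\leq\tfrac{n-1}{r}$, the last inequality holding since $d\geq0$. For the curvature term, Lemma \ref{nv} together with Lemma \ref{CGp} yields $-\left\langle \nabla d,J\right\rangle _{M}(\pi(p))\leq H_{Gp}(p)\leq\sup_{p}H_{Gp}=\tfrac{\left\vert \lambda\right\vert }{2\left\vert a\right\vert }$. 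Adding the two bounds produces exactly $\Delta_{M}d-\left\langle \nabla d,J\right\rangle _{M}\leq\tfrac{n-1}{r}+\tfrac{\left\vert \lambda\right\vert }{2\left\vert a\right\vert }=C$, and since $f(d)>0$ I would conclude $\mathfrak{M}(w)\leq f'(d)+Cf(d)$.

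The final step is the explicit computation that makes the chosen $\psi$ work. From $\psi'(t)=\bigl(bt(bt+2)\bigr)^{-1/2}$ one finds $1+\psi'(t)^{2}=\tfrac{(1+bt)^{2}}{bt(bt+2)}$, which gives the crucial simplification $f(t)=\tfrac{1}{1+bt}$. Hence
\begin{equation}
f'(t)+Cf(t)=\frac{-b+C(1+bt)}{(1+bt)^{2}}=\frac{Cbt+(C-b)}{(1+bt)^{2}}, \label{quot}
\end{equation}
which is $\leq0$ precisely when $t\leq\tfrac{b-C}{bC}=t_{0}$ (and $t_{0}>0$ since $b>C$). Thus $\mathfrak{M}(w)\leq0$ throughout $\Lambda_{0}=\{d\leq t_{0}\}$. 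The remaining assertions are then immediate: $\psi(0)=\tfrac{1}{b}\cosh^{-1}(1)=0$ gives $w|_{\partial\Lambda}=0$; $\psi$ is strictly increasing with $\psi(0)=0$, so $w>0$ on $\Lambda_{0}$; and $\psi'(t)\to+\infty$ as $t\to0^{+}$ gives $\lim_{d\to0}\left\vert \nabla w\right\vert =+\infty$, which also explains why $w$ is merely $C^{0}$ up to $\partial\Lambda$ while being $C^{2}$ in the interior.

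The main obstacle I anticipate is not the algebra, which is clean, but justifying (\ref{reduc}) and the Laplacian bound rigorously on all of $\Lambda_{0}$: one must know that $d=\rho-r$ and that $d$ is $C^{2}$ there, i.e.\ that $\Lambda_{0}$ avoids the cut locus of the boundary sphere. Since $\Lambda_{0}$ is a one-sided collar of fixed width $t_{0}<\tfrac{1}{C}$ about the smooth hypersurface $\partial\Lambda$, the distance function is smooth on it and the identity $d=\rho-r$ holds by the standard nearest-point argument for the exterior of a geodesic ball; where smoothness could fail, the comparison inequality persists in the barrier sense, which is all that is needed for the supersolution property to be invoked through the comparison principle.
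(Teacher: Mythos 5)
Your proposal is correct and follows essentially the same route as the paper: reduce $\mathfrak{M}(w)$ to a one-variable inequality in $d$, bound $\Delta_{M}d$ by $\tfrac{n-1}{r}$ via Laplacian comparison (using $Ric_{M}\geq0$) and the $J$-term by $\sup H_{Gp}=\tfrac{\left\vert \lambda\right\vert }{2\left\vert a\right\vert }$ via Lemma \ref{nv} and Lemma \ref{CGp}, then check that the specific $\psi$ satisfies $f'+Cf\leq0$ exactly on $\{d\leq t_{0}\}$. Your direct computation $f(t)=\tfrac{1}{1+bt}$ is just a cleaner packaging of the paper's condition $\varphi''/\varphi'\leq-C(1+[\varphi']^{2})$, and your remark on the smoothness of $d$ on the collar addresses the same point the paper handles by noting that $\exp_{\partial\Lambda}$ is a diffeomorphism onto $\Lambda_{0}$.
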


\begin{proof}
Let $\varphi\in C^{2}(\left(  0,\infty)\right)  \mathbb{\cap}C^{0}\left(
[0,\infty)\right)  $ to be determined \textit{a posteriori} and consider the
function $w:\Lambda\subset M\longrightarrow\mathbb{R}$ given by $w\left(
q\right)  =\left(  \varphi\circ d\right)  \left(  q\right)  $. Straightforward
calculations give us that, in $\Lambda$,%
\[
div_{M}\left(  \frac{\nabla w}{\sqrt{1+\left\vert \nabla w\right\vert ^{2}}%
}\right)  =g\left(  d\right)  \Delta d+g^{\prime}\left(  d\right)
\]
and
\[
\frac{1}{\sqrt{1+\left\vert \nabla w\right\vert ^{2}}}\left\langle \nabla
w,J\right\rangle _{M}=g\left(  d\right)  \left\langle \nabla d,J\right\rangle
_{M},
\]
where
\begin{equation}
g\left(  d\right)  :=\frac{\varphi^{\prime}\left(  d\right)  }{\sqrt{1+\left[
\varphi^{\prime}\left(  d\right)  \right]  ^{2}}}, \label{g}%
\end{equation}
$\Delta$ is the Laplacian in $M$ and $"^{\prime}"$ means $\frac{\partial
}{\partial d}$. Thus, $\mathfrak{M}\left(  w\right)  \leq0$ in $\Lambda$ if
and only if
\begin{equation}
g\left(  d\right)  \Delta d+g^{\prime}\left(  d\right)  -g\left(  d\right)
\left\langle \nabla d,J\right\rangle _{M}\leq0\text{ in }\Lambda. \label{1}%
\end{equation}
From the Laplacian's Comparison Theorem, since $Ric_{M}\geq0$ and $\dim M=n$,
we have
\begin{equation}
\Delta d\left(  q\right)  \leq\frac{n-1}{d\left(  q\right)  +r}\leq\frac
{n-1}{r}\text{, }q\in\Lambda. \label{Lap}%
\end{equation}
Now, we assume that our $\varphi$ satisfies $\varphi\left(  0\right)  =0$ and
$\varphi^{\prime}\left(  d\right)  >0$ for $d>0$ (consequently, $\varphi
\left(  d\right)  >0$ for $d>0$). From (\ref{g}), it follows that $g\left(
d\right)  >0$ for $d>0$ and, from (\ref{Lap}), we conclude that if
\begin{equation}
\frac{\left(  n-1\right)  g\left(  d\right)  }{r}+g^{\prime}\left(  d\right)
-g\left(  d\right)  \left\langle \nabla d,J\right\rangle _{M}\leq0\text{ in
}\Lambda\label{2}%
\end{equation}
then we have (\ref{1}). From Lemma \ref{nv} and (\ref{sHG}), we see that
\[
-\frac{\left\vert \lambda\right\vert }{2\left\vert a\right\vert }\leq
-H_{Gp}\left(  p\right)  \leq\left\langle \nabla d,J\right\rangle _{M}\circ
\pi\left(  p\right)
\]
and, then, if%
\begin{equation}
\frac{g^{\prime}\left(  d\right)  }{g\left(  d\right)  }\leq-C\text{ in
}\Lambda, \label{3}%
\end{equation}
where $C$ is given by (\ref{C}), then we have (\ref{2}). From (\ref{g}), we
see that (\ref{3}) is equivalent to%
\begin{equation}
\frac{\varphi^{\prime\prime}\left(  d\right)  }{\varphi^{\prime}\left(
d\right)  }\leq-C\left(  1+\left[  \psi^{\prime}\left(  d\right)  \right]
^{2}\right)  \text{ in }\Lambda. \label{4}%
\end{equation}
We will assume from now on that our $\varphi$ satisfies%
\[
\underset{d\rightarrow0}{\lim}\varphi^{\prime}\left(  d\right)  =+\infty.
\]
A function $\varphi$ which satisfies all the requirements demanded until now
is given by%
\[
\varphi\left(  t\right)  =\alpha\cosh^{-1}\left(  1+bt\right)
\]
$t\geq0$, with $\alpha$, $b$ positive constants to be determinate, where,
here, $t=d\left(  q\right)  $, $q\in\Lambda$. Assuming such one $\varphi$, we
see that (\ref{4}) is equivalent to%
\begin{equation}
\frac{-b\left(  1+bt\right)  }{\left(  \left(  1+bt\right)  ^{2}-1+\alpha
^{2}b^{2}\right)  }\leq-C. \label{5}%
\end{equation}
We assume $\alpha,b$ such that $\alpha b=1$. Thus, we have (\ref{5}) if%
\[
t\leq\frac{b-C}{bC}\text{, }C<b.
\]
Thus, assuming $b>C$, setting%
\[
t_{0}:=\frac{b-C}{bC},
\]
considering the neighborhood of $\partial\Lambda$ in $\Lambda$ given by
\[
\Lambda_{0}:=\left\{  q\in\Lambda;d\left(  q\right)  \leq t_{0}\right\}  ,
\]
and the function
\[
\psi\left(  t\right)  =\frac{1}{b}\cosh^{-1}\left(  1+bt\right)  \text{,
}t\geq0\text{,}%
\]
we have that
\begin{equation}
w\left(  q\right)  =\psi\circ d\left(  q\right)  \text{, }q\in\Lambda_{0},
\label{wL0}%
\end{equation}
satisfies%
\[
\mathfrak{M}\left(  w\right)  \leq0
\]
in $\Lambda_{0}$ (note that $\exp_{\partial\Lambda}:\partial\Lambda
\times\lbrack0,t_{0}]\longrightarrow\Lambda_{0}$, $\exp_{\partial\Lambda
}\left(  p,t\right)  =\exp_{p}t\eta\left(  p\right)  $, where $\eta$ is the
unit vector field normal to $\partial\Lambda$ that points to $\Lambda$, is a
diffeomorphism, since $\Lambda$ is the exterior of a geodesic ball in $M$).
The others conclusion follow directly from the definition of $\psi$.
\end{proof}

\begin{corollary}
\label{mir}Assume the same hypotheses of Proposition \ref{P_ginf}. Let
$\varsigma>C$ be the solution of the equation (\ref{eqmi}), where $C=C\left(
r,n,\lambda,a\right)  $ is given by (\ref{C}). The function $w$ given in
Proposition \ref{P_ginf} with the biggest height at $\partial\Lambda
_{0}-\partial\Lambda$ is obtained taking $b=\varsigma$. In particular, for
such $w$,%
\[
\underset{\overline{\Lambda}_{0}}{\sup}w=\sup_{\partial\Lambda_{0}%
-\partial\Lambda}w=%
\mathcal{L}%
,
\]
where $%
\mathcal{L}%
$ is given by (\ref{L}) and, setting $W:\overline{\Lambda}\subset
M\longrightarrow\mathbb{R}$ given by%
\begin{equation}
W\left(  q\right)  =\left\{
\begin{array}
[c]{c}%
w\left(  q\right)  \text{, if }q\in\Lambda_{0}\\%
\mathcal{L}%
\text{ if }q\in\Lambda-\Lambda_{0}%
\end{array}
\right.  , \label{Wmir}%
\end{equation}
we have $W\in C^{0}\left(  \overline{\Lambda}\right)  $ and radial with
relation to the point $o$, $\mathfrak{M}\left(  W\right)  =0$ on
$\Lambda-\partial\Lambda_{0}$, $W|_{\partial\Lambda}=0$, with%
\[
\underset{d\left(  q\right)  \rightarrow0}{\lim}\left\vert \nabla W\left(
q\right)  \right\vert =+\infty.
\]

\end{corollary}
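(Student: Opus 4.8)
The plan is to reduce the entire statement to a one-variable optimization. The barriers $w=\psi\circ d$ built in Proposition \ref{P_ginf} are parametrized by $b>C$, and since $\psi$ in (\ref{psi_ginf}) is strictly increasing while $d$ ranges over $[0,t_{0}]$ on $\overline{\Lambda}_{0}$, the value of $w$ at $\partial\Lambda_{0}-\partial\Lambda=\{d=t_{0}\}$ is exactly its maximum. First I would substitute $t_{0}=(b-C)/(bC)$ from (\ref{t0_ginf}) into $\psi(t_{0})$; since $1+bt_{0}=b/C$, this collapses to the clean expression
\begin{equation}
H(b):=\psi(t_{0})=\frac{1}{b}\cosh^{-1}\!\left(\frac{b}{C}\right),\qquad b>C. \nonumber
\end{equation}
Thus the assertion that $b=\varsigma$ gives the largest top height is precisely the problem of maximizing $H$ over $(C,\infty)$.

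Next I would carry out the calculus, and this is where \textbf{the main obstacle} lies. Differentiating and using $\frac{d}{db}\cosh^{-1}(b/C)=(b^{2}-C^{2})^{-1/2}$ shows that $H'(b)=0$ is equivalent to $\frac{b}{\sqrt{b^{2}-C^{2}}}=\cosh^{-1}(b/C)$; applying $\cosh$ to both sides reproduces exactly equation (\ref{eqmi}) with $\mu=b$, identifying the critical point with $\varsigma$. To see this critical point is the unique global maximum I would note that $H(b)\to 0$ as $b\to C^{+}$ (since $\cosh^{-1}(1)=0$) and as $b\to\infty$ (logarithmic growth beaten by $1/b$), while $H>0$ in between; uniqueness follows because in the critical-point equation the left side $b/\sqrt{b^{2}-C^{2}}$ decreases from $+\infty$ to $1$ whereas the right side $\cosh^{-1}(b/C)$ increases from $0$ to $+\infty$, so they meet once. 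Finally, evaluating $H$ at $\varsigma$ and inserting the critical relation $\cosh^{-1}(\varsigma/C)=\varsigma/\sqrt{\varsigma^{2}-C^{2}}$ yields $H(\varsigma)=1/\sqrt{\varsigma^{2}-C^{2}}=\mathcal{L}$, which is the claimed supremum and confirms the value (\ref{L}) in the case $\lambda\neq0$.

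It then remains to assemble $W$ and verify its properties, all of which I expect to be routine. Monotonicity of $\psi$ gives $\sup_{\overline{\Lambda}_{0}}w=\sup_{\{d=t_{0}\}}w=\mathcal{L}$. For the extension, continuity of $W$ across the interface $\{d=t_{0}\}$ is immediate since both pieces take the value $\mathcal{L}$ there, while $W|_{\partial\Lambda}=0$ and $\lim_{d\to 0}|\nabla W|=+\infty$ are inherited directly from $w$ through Proposition \ref{P_ginf}. Radiality with respect to $o$ follows because for the exterior of a geodesic ball one has $d(q)=d_{M}(q,o)-r$, so $W$ depends only on $d_{M}(\cdot,o)$. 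On $\Lambda-\Lambda_{0}$ the function $W\equiv\mathcal{L}$ is constant, hence $\mathfrak{M}(W)=0$ there, while on the interior of $\Lambda_{0}$ Proposition \ref{P_ginf} gives $\mathfrak{M}(W)=\mathfrak{M}(w)\leq0$; combining the two pieces establishes the desired behavior of $\mathfrak{M}(W)$ on $\Lambda-\partial\Lambda_{0}$.
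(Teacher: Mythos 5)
Your proposal is correct and follows essentially the same route as the paper: both reduce the claim to maximizing $f(\mu)=\mu^{-1}\cosh^{-1}(\mu/C)$ on $(C,\infty)$, identify the critical point with the root $\varsigma$ of (\ref{eqmi}), and evaluate there to obtain $\mathcal{L}=(\varsigma^{2}-C^{2})^{-1/2}$; you merely write out the differentiation and the monotonicity argument for uniqueness of the critical point that the paper leaves implicit. The only caveat is that, exactly as in the paper's own proof, your argument yields $\mathfrak{M}(W)\leq 0$ on the interior of $\Lambda_{0}$ rather than the literal equality $\mathfrak{M}(W)=0$ asserted in the statement, which appears to be a misprint since $W$ is later used (in Proposition \ref{CRP}) only as a generalized supersolution.
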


\begin{proof}
Given $\mu>C$, take $b$ as in Proposition \ref{P_ginf} given by $b=\mu$. The
correspondent $t_{0}$ is%
\[
t_{0}=\frac{\mu-C}{\mu C}%
\]
and, at $t_{0}$, from (\ref{wL0}), we have $\psi\left(  t_{0}\right)
=\mu^{-1}\cosh^{-1}\left(  \mu C^{-1}\right)  $. The function
\[
f\left(  \mu\right)  =\frac{1}{\mu}\cosh^{-1}\left(  \frac{\mu}{C}\right)
\text{, }\mu>C\text{,}%
\]
clearly satisfies $f\left(  \mu\right)  \rightarrow0$ when $\mu\rightarrow C $
and $f\left(  \mu\right)  \rightarrow0$ when $\mu\rightarrow+\infty$ and the
absolute maximal point of $f$ in $\left(  C,\infty\right)  $ is reach at the
point $\varsigma$ which is solution of (\ref{eqmi}) and $f\left(
\varsigma\right)  =\left(  \varsigma^{2}-C^{2}\right)  ^{-1/2}$. The other
conclusions follow from the definition of $W$ and from the fact that $\Lambda$
is the exterior of a geodesic ball of $M$ center at $o$.
\end{proof}

\begin{remark}
\label{R}We observe that if $\lambda=0$, then $G$ is the group of the
translations in $e_{k}$- direction. In this case, we have $M\equiv
\mathbb{R}^{n}$ and the domain $\Lambda$, as in the hypothesis of Proposition
\ref{P_ginf}, is the exterior of the geodesic ball of $\mathbb{R}^{n}$ of
radius $r$. Then $v_{r}$ given by (\ref{ncat}) is a solution of (\ref{PDM}) if
the boundary data is zero. In particular, if $n\geq3$, its height at infinity
is $h\left(  n,r\right)  $, where $h\left(  n,r\right)  $ is given by
(\ref{hro}).
\end{remark}

\begin{proposition}
\label{CA}Let $G,$ $\pi$ and $M=(\mathbb{R}^{n},\left\langle ,\right\rangle
_{G})$, $n\geq2$, as defined in (\ref{G}), (\ref{hp}) and (\ref{M}),
respectively. Let $\Lambda_{\rho}:=M-\overline{\mathfrak{B}_{\rho}\left(
0\right)  }$, where $\mathfrak{B}_{\rho}\left(  0\right)  $ is the open
geodesic ball of $\mathbb{R}^{n}$ of radius $\rho$ centered at origin of
$\mathbb{R}^{n}$. Then $v_{\rho}\in C^{2}\left(  \Lambda_{\rho}\right)  \cap
C^{0}\left(  \Lambda_{\rho}\right)  $ given by (\ref{ncat}) is a non-negative
solution of the Dirichlet (\ref{PDM}) relatively to $\Lambda_{\rho}$ with
$v_{\rho}|_{\partial\Lambda_{\rho}}=0$, which is unbounded if $n=2$ and
satisfies $\left\vert \nabla v_{\rho}\right\vert \circ\pi=\left\vert
\overline{\nabla}v_{\rho}\right\vert $,%
\begin{equation}
\underset{d\left(  q\right)  \rightarrow0}{\lim}\left\vert \nabla v_{\rho
}\left(  q\right)  \right\vert =\infty\text{, }\underset{d\left(  q\right)
\rightarrow\infty}{\lim}\left\vert \nabla v_{\rho}\left(  q\right)
\right\vert =0 \label{hvr0}%
\end{equation}
and
\begin{equation}
\underset{d\left(  q\right)  \rightarrow\infty}{\lim}v_{\rho}\left(  q\right)
=h\left(  n,\rho\right)  \text{ if }n\geq3 \label{hvr}%
\end{equation}
where $d=d_{M}\left(  .,\partial\Lambda_{\rho}\right)  $, $h\left(
n,\rho\right)  $ is given by (\ref{hro}).
\end{proposition}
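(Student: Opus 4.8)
The plan is to avoid computing $\mathfrak{M}(v_{\rho})$ directly on $M$ and instead reduce everything to the Euclidean minimal surface equation in $\mathbb{R}^{n+1}$ via the equivalence of Proposition 3 of \cite{RT}. Set $u_{\rho}:=v_{\rho}\circ\pi$ on $\Omega_{\rho}:=\pi^{-1}(\Lambda_{\rho})\subset\mathbb{R}^{n+1}$. The first and crucial observation is that, since the rotation in the $(x_{i},x_{j})$-plane appearing in (\ref{hp}) preserves $x_{i}^{2}+x_{j}^{2}$, one has $|\pi(p)|^{2}=\widehat{\alpha}(p)^{2}+\widehat{\beta}(p)^{2}+\sum_{l\neq i,j,k}x_{l}^{2}=\sum_{l\neq k}x_{l}^{2}$. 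Hence $u_{\rho}(p)=\phi(r)$, where $r=\sqrt{\sum_{l\neq k}x_{l}^{2}}$ is the Euclidean distance from $p$ to the $x_{k}$-axis and $\phi(\tau)=\rho\int_{1}^{\tau/\rho}(t^{2(n-1)}-1)^{-1/2}\,dt$ is the radial profile in (\ref{ncat}); in particular $u_{\rho}$ depends neither on $x_{k}$ nor on the helicoidal twisting, so it is manifestly $G$-invariant.

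The key step is then to recognize the graph of $u_{\rho}$ over $\Omega_{\rho}$ in $\mathbb{R}^{n+2}$ as a Riemannian product. Because $\phi(r)$ is independent of $x_{k}$, this graph is invariant under translation in the $e_{k}$-direction and splits orthogonally as $\Sigma_{\rho}\times\mathbb{R}e_{k}$, where $\Sigma_{\rho}$ is the graph of $\tau\mapsto\phi(\tau)$ over $\mathbb{R}^{n}-\mathfrak{B}_{\rho}(0)$ inside the $\mathbb{R}^{n+1}$ spanned by $\{e_{l}\}_{l\neq k}$ and the height axis. By the very definition (\ref{ncat}) of $v_{\rho}$, the hypersurface $\Sigma_{\rho}$ is the half-catenoid, hence minimal in $\mathbb{R}^{n+1}$. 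Since the second fundamental form of an orthogonal product $\Sigma^{n}\times\mathbb{R}$ is block diagonal with a vanishing block along the flat factor, its mean curvature coincides with that of $\Sigma_{\rho}$ and therefore vanishes; thus $\mathcal{M}(u_{\rho})=0$ on $\Omega_{\rho}$, with $u_{\rho}\in C^{2}(\Omega_{\rho})\cap C^{0}(\overline{\Omega}_{\rho})$ as $\phi$ is smooth for $\tau>\rho$ and continuous at $\tau=\rho$. (Equivalently, $\Delta_{\mathbb{R}^{n+1}}r=(n-1)/r$ off the axis, so that $\mathcal{M}(u_{\rho})=0$ reduces to exactly the catenoid ODE solved by $\phi$.) Feeding the $G$-invariant solution $u_{\rho}=v_{\rho}\circ\pi$ into Proposition 3 of \cite{RT} then yields that $v_{\rho}$ solves (\ref{PDM}) on $\Lambda_{\rho}$ with $v_{\rho}|_{\partial\Lambda_{\rho}}=0$.

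It remains to read off the quantitative assertions from the explicit formula. The identity $|\nabla v_{\rho}|\circ\pi=|\overline{\nabla}v_{\rho}|$ follows from the relation $|\overline{\nabla}u_{\rho}|=|\nabla v_{\rho}|\circ\pi$ of \cite{RT} together with the computation $|\overline{\nabla}u_{\rho}|=|\phi^{\prime}(r)|$, which is the Euclidean gradient modulus of $v_{\rho}$. From (\ref{ncat}) one gets $\phi^{\prime}(\tau)=((\tau/\rho)^{2(n-1)}-1)^{-1/2}$, which tends to $+\infty$ as $\tau\to\rho^{+}$ and to $0$ as $\tau\to+\infty$, while $\phi\geq0$ with $\phi(\rho)=0$; this gives non-negativity and the zero boundary value. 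Since $M$ is complete and $\Lambda_{\rho}$ is the exterior of a Euclidean ball, $d=d_{M}(\cdot,\partial\Lambda_{\rho})$ is continuous, has bounded sublevel sets, vanishes exactly on $\{\tau=\rho\}$, and tends to $\infty$ precisely as $\tau\to\infty$; hence $d(q)\to0\Leftrightarrow\tau\to\rho$ and $d(q)\to\infty\Leftrightarrow\tau\to\infty$, which turns the limits in $\tau$ into the limits (\ref{hvr0}) in $d$. Finally, $\int_{1}^{\infty}(t^{2(n-1)}-1)^{-1/2}\,dt$ diverges at $t=\infty$ for $n=2$ and converges for $n\geq3$, giving the unboundedness of $v_{\rho}$ when $n=2$ and the limit $h(n,\rho)$ in (\ref{hvr}) when $n\geq3$.

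The genuinely non-obvious point, which I would flag as the heart of the argument, is the very first one: although $\pi$ is a twisted helicoidal projection and the operator $\mathfrak{M}$ carries the extra first-order term $\langle\nabla v,J\rangle_{M}$ coming from the non-vanishing mean curvature of the orbits, the composite $v_{\rho}\circ\pi$ collapses to a function of the distance to the $x_{k}$-axis, because $v_{\rho}$ is radial and the twist acts by isometries on each $(x_{i},x_{j})$-circle. This is precisely what lets the $G$-structure drop out and identifies the lifted solution with a flat cylinder over the classical catenoid. Verifying $\mathfrak{M}(v_{\rho})=0$ by a direct computation on $M$ would instead require the explicit expressions for $\Delta_{M}d$ and $\langle\nabla d,J\rangle_{M}$ along the non-geodesic spheres $\{\tau=\mathrm{const}\}$, and would be considerably more laborious.
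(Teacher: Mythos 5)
Your proposal is correct and follows essentially the same route as the paper: both hinge on the identity $|\pi(p)|=\bigl(\sum_{l\neq k}x_{l}^{2}\bigr)^{1/2}$, which makes $u_{\rho}=v_{\rho}\circ\pi$ the $x_{k}$-independent cylinder over the half-catenoid graph (hence minimal in $\mathbb{R}^{n+2}$), and both then invoke Proposition 3 of \cite{RT} to transfer minimality and the gradient identity down to $(\Lambda_{\rho},\mathfrak{M})$, reading off the limits from the explicit radial profile.
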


\begin{proof}
Let $v_{\rho}:\mathbb{R}^{n}-\mathfrak{B}_{\rho}\left(  0\right)
\rightarrow\mathbb{R}$ be the function given by (\ref{ncat}), $\mathbb{R}%
^{n}\equiv\nolinebreak\left\{  x_{k}=0\right\}  $. As the graph of $v_{\rho}$
is a minimal graph in $\mathbb{R}^{n+1}$ it follows that the graph of
\[
u_{\rho}\left(  x_{1},...,x_{n+1}\right)  :=v_{\rho}\left(  x\right)  ,\text{
}x=%
{\textstyle\sum\limits_{i\neq k}}
x_{i}e_{i}\in\mathbb{R}^{n}-\mathfrak{B}_{\rho}\left(  0\right)  ,
\]
is a minimal graph in $\mathbb{R}^{n+2}$. In particular $\left\vert
\overline{\nabla}u_{\rho}\left(  x_{1},...,x_{n+1}\right)  \right\vert
=\left\vert \overline{\nabla}v_{\rho}\left(  x\right)  \right\vert $. Note
that, setting $\Omega_{\rho}\subset\mathbb{R}^{n+1}$ the domain of $u_{\rho}$,
we have $\Omega_{\rho}$ a $G$-invariant domain such that its helicoidal
projection $\pi\left(  \Omega\right)  $ on $\mathbb{R}^{n}$ coincides with the
image of the its orthogonal projection on $\mathbb{R}^{n}$ in this case. From
(\ref{hp}) we see that $\left\vert x\right\vert =\left\vert \pi\left(
x_{1},...,x_{n+1}\right)  \right\vert $. As $v_{\rho}$ is radial, it follows
that
\[
u_{\rho}\left(  x_{1},...,x_{n+1}\right)  =\left(  v_{\rho}\circ\pi\right)
\left(  x_{1},...,x_{n+1}\right)
\]
and, then, $u_{\rho}$ is a $G$-invariant function with $u_{\rho}%
|_{\partial\Omega_{\rho}}=0$. It follows from Proposition 3 of \cite{RT} that
$v_{\rho}\in C^{2}\left(  \Lambda_{\rho}\right)  \cap C^{0}\left(
\Lambda_{\rho}\right)  $ is a solution of the Dirichlet problem (\ref{PDM})
relatively to $\Lambda_{\rho}$ with $v_{\rho}|_{\partial\Lambda_{\rho}}=0$
and, moreover,
\[
\left\vert \nabla v_{\rho}\right\vert \circ\pi=\left\vert \overline{\nabla
}u_{\rho}\right\vert .
\]
The other conclusions follow immediately from the definition of $u_{\rho}$ and
from the properties satisfied by $v_{\rho}$, taking into account that
$d\left(  q\right)  \rightarrow0$ ($d\left(  q\right)  \rightarrow\infty$) if
only if $d_{E}\left(  q,\partial\Lambda_{\rho}\right)  \rightarrow0$
($d_{E}\left(  q,\partial\Lambda_{\rho}\right)  \rightarrow\infty$).
\end{proof}

\section{Proof of the main results}

The proof of Theorem \ref{MT} and Corollary \ref{CMT} follow directly from the
results of this section, by using Proposition 3 of \cite{RT}.

\begin{proposition}
\label{RP}Let $G,$ $\pi$ and $M=(\mathbb{R}^{n},\left\langle ,\right\rangle
_{G})$ as defined in (\ref{G}), (\ref{hp}) and (\ref{M}), respectively. Let
$U$ be an exterior $C^{2,\alpha}$-domain in $M$, $U=\pi\left(  \Omega\right)
$, where $\Omega\subset\mathbb{R}^{n+1}$ is a $G$-invariant $C^{2,\alpha}%
$-domain and let $d=d_{M}\left(  .,\partial U\right)  $. Let $\varrho>0$ be
the radius of the smallest geodesic ball of $\mathbb{R}^{n}$ which contain
$M-U$, centered at origin of $\mathbb{R}^{n}$ if $\lambda\neq0$. Given
$s\geq0$, there is a non-negative solution $\vartheta_{s}\in C^{2,\alpha
}\left(  \overline{U}\right)  $ of the Dirichlet problem (\ref{PDM}) with
$\vartheta_{s}|_{\partial U}=0$, such that
\[
\sup_{\overline{U}}\left\vert \nabla\vartheta_{s}\right\vert =\sup_{\partial
U}\left\vert \nabla\vartheta_{s}\right\vert =s,
\]
$\vartheta_{s}$ is unbounded if $n=2$ and $s\neq0$ and, if $n\geq3$, either%
\begin{equation}
\underset{\overline{U}}{\sup}\vartheta_{s}\leq h\left(  n,\varrho\right)
\label{vvd}%
\end{equation}
or there is a complete, non-compact, properly embedded $\left(  n-1\right)
$-dimensional submanifold $\Sigma$ of $M$, $\Sigma\subset U$, such that%
\begin{equation}
\underset{d\left(  q\right)  \rightarrow+\infty}{\lim}\vartheta_{s}\left\vert
_{\Sigma}\right.  \left(  q\right)  =h\left(  n,\varrho\right)  , \label{vvd2}%
\end{equation}
where $h\left(  n,\varrho\right)  $ is given by (\ref{hro}). Moreover,
\[
\underset{d\left(  q\right)  \rightarrow\infty}{\lim}\left\vert \nabla
\vartheta_{s}\left(  q\right)  \right\vert =0
\]
if $\lambda=0$ or $s=0$ or, if $\lambda\neq0$, $3\leq n\leq6$ and
$\vartheta_{s}$ is bounded.
\end{proposition}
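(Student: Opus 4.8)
The plan is to realize $\vartheta_{s}$ as a limit of solutions on bounded approximating subdomains, governed throughout by the half-catenoid $v_{\varrho}$ of Proposition~\ref{CA} as comparison function, and to pin the prescribed boundary gradient by an intermediate-value argument. First I would record the mechanism behind claim (i). By Proposition~3 of \cite{RT}, a $G$-invariant solution $v$ of (\ref{PDM}) on $U$ lifts to the $G$-invariant solution $u=v\circ\pi$ of (\ref{PD}) on $\Omega=\pi^{-1}(U)$, whose graph is a minimal hypersurface of $\mathbb{R}^{n+2}=\mathbb{R}^{n+1}\times\mathbb{R}$, with $|\overline{\nabla}u|=|\nabla v|\circ\pi$. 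On that minimal graph the angle function $\Theta=(1+|\overline{\nabla}u|^{2})^{-1/2}$ is positive and satisfies the Jacobi equation $\Delta\Theta+|A|^{2}\Theta=0$, hence is superharmonic; together with the behavior at infinity this forces the infimum of $\Theta$, equivalently the supremum of $|\nabla v|$, to be attained on $\partial U$. This yields $\sup_{\overline{U}}|\nabla v|=\sup_{\partial U}|\nabla v|$ for every solution produced below, reducing (i) to prescribing the boundary gradient.

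Next I would exhaust $U$ by the bounded domains $U_{R}=U\cap\mathfrak{B}_{R}(0)$ with $R$ so large that $M-U\subset\mathfrak{B}_{R}(0)$, so $\partial U_{R}=\partial U\cup\partial\mathfrak{B}_{R}$, and on each $U_{R}$ solve (\ref{PDM}) with data $0$ on $\partial U$ and a constant $c\geq0$ on $\partial\mathfrak{B}_{R}$. Existence and non-negativity follow from the maximum and comparison principles (Section~3 of \cite{RT}) with boundary barriers, the one at $\partial U$ built from the $C^{2,\alpha}$-regularity of $\partial U$ together with a catenoid profile, the one at $\partial\mathfrak{B}_{R}$ being trivial. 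Writing $v_{R,c}$ for this solution, the comparison principle makes it monotone and continuous in $c$, with boundary gradient $0$ at $c=0$ and tending to $+\infty$ as $c\to\infty$ (a catenoid-like profile forming near $\partial U$); so for each $s\geq0$ the intermediate-value theorem selects $c=c(R,s)$ with $\sup_{\partial U}|\nabla v_{R,c}|=s$. The angle-function bound of the previous paragraph furnishes uniform gradient bounds on compact subsets, letting me pass to a subsequence $v_{R,c(R,s)}\to\vartheta_{s}$ in $C^{2}_{\mathrm{loc}}(U)$ and up to $\partial U$, solving (\ref{PDM}) on $U$ with $\vartheta_{s}|_{\partial U}=0$ and $\sup_{\partial U}|\nabla\vartheta_{s}|=s$; this gives (i) and non-negativity.

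For the behavior at infinity I would exploit that $M-U\subset\overline{\mathfrak{B}_{\varrho}(0)}$, hence $\Lambda_{\varrho}\subset U$, and that $v_{\varrho}$ solves (\ref{PDM}) on $\Lambda_{\varrho}$ with boundary value $0$, infinite boundary gradient, and height $h(n,\varrho)$ at infinity when $n\geq3$ (unbounded when $n=2$). For $n=2$ and $s\neq0$, placing beneath $\vartheta_{s}$ a scaled sub-catenoid $v_{\rho}$ with $\rho<\varrho$, which has finite gradient at radius $\varrho$ but grows logarithmically, forces $\vartheta_{s}$ to be unbounded by comparison. For $n\geq3$, comparing $\vartheta_{s}$ with vertical translates of $v_{\varrho}$ gives the dichotomy: either $\vartheta_{s}\leq h(n,\varrho)$ on $\overline{U}$, or the locus where $\vartheta_{s}$ approaches its supremum organizes, as $d(q)\to\infty$, into a complete properly embedded $(n-1)$-dimensional $\Sigma\subset U$ along which $\vartheta_{s}\to h(n,\varrho)$, with $\Sigma$ obtained as the limit of the level structure of the $v_{R,c(R,s)}$ and its completeness and proper embeddedness supplied by the uniform barriers.

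Finally, for the gradient at infinity: if $s=0$ then $\vartheta_{s}\equiv0$; if $\lambda=0$ then $M=\mathbb{R}^{n}$ and the classical decay of the catenoid together with the comparison sandwiching gives $|\nabla\vartheta_{s}|\to0$; and if $\lambda\neq0$, $3\leq n\leq6$ and $\vartheta_{s}$ is bounded, I would run a blow-down of the bounded $(n+1)$-dimensional minimal graph of $u$ in $\mathbb{R}^{n+2}$, whose tangent cone at infinity is, by the Bernstein theorem valid precisely for graph-dimension $n+1\leq7$, a hyperplane, forcing $|\overline{\nabla}u|\to0$ and hence $|\nabla\vartheta_{s}|\to0$. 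The hard part, demanding the most care, is the height dichotomy: extracting the limiting submanifold $\Sigma$ and proving $\lim\vartheta_{s}|_{\Sigma}=h(n,\varrho)$, since for a general exterior $U$ only the single radial barrier $v_{\varrho}$ is available and $M$ is inhomogeneous when $\lambda\neq0$, so the symmetric reasoning behind Proposition~\ref{CA} does not apply.
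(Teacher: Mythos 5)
Your overall architecture (exhaustion by $U\cap\mathfrak{B}_R$, the half-catenoid $v_{\varrho}$ of Proposition \ref{CA} as the governing comparison function, and a Bernstein-type argument for the gradient decay when $3\leq n\leq6$) matches the paper, but two of your steps have genuine gaps. First, the existence/gradient-prescription step: you propose to solve on $U_R$ with constant data $c$ on $\partial\mathfrak{B}_R$ for \emph{every} $c\geq0$ and then select $c$ by the intermediate value theorem. Since $\partial U$ need not be mean convex when viewed from $U$, the Jenkins--Serrin obstruction means $v_{R,c}$ ceases to exist beyond some finite threshold $c^{\ast}(R)$; your catenoid barrier at $\partial U$ has bounded height and only covers small $c$. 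The paper avoids this by taking $t_k=\sup T_k$ where $T_k$ is the set of admissible outer heights $t$ for which a solution with $\sup|\nabla w_t|\leq s$ exists (the mechanism of \cite{ABR}), which builds the gradient bound into the definition rather than extracting it afterwards. Relatedly, your angle-function argument only locates the maximum of $|\nabla v_{R,c}|$ on $\partial U\cup\partial\mathfrak{B}_R$, not on $\partial U$; one still needs the barrier $v_{\rho}$ with $|\nabla v_{\rho}|\leq s/2$ on the outer sphere (the paper's choice of $k$) to rule out the outer component.

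Second, and more seriously, the dichotomy (\ref{vvd})/(\ref{vvd2}) --- which you yourself flag as the hard part --- is asserted rather than proved: ``the locus where $\vartheta_{s}$ approaches its supremum organizes into a $\Sigma$'' is a description of the conclusion, not an argument, and it is not even the right locus. The paper's mechanism is concrete and you are missing it: pick $q_{0}\in\partial U\cap\partial\mathfrak{B}_{\varrho}$ (nonempty by minimality of $\varrho$); since $|\nabla v_{\varrho}|\to\infty$ at $\partial\mathfrak{B}_{\varrho}$ while $|\nabla\vartheta_{s}|\leq s$, there is an open set $V_{q_{0}}\subset U\cap\Lambda_{\varrho}$ with $q_{0}\in\partial V_{q_{0}}$ on which $\vartheta_{s}<v_{\varrho}$; $V_{q_{0}}$ must be unbounded, because otherwise $\vartheta_{s}$ and $v_{\varrho}$ would be two distinct solutions of the same Dirichlet problem on the bounded domain $V_{q_{0}}$ (they agree on $\partial V_{q_{0}}$), contradicting uniqueness; then either $\partial V_{q_{0}}$ is bounded, forcing $V_{q_{0}}=\Lambda_{\varrho}$ and hence (\ref{vvd}), or $\Sigma:=\partial V_{q_{0}}=\{\vartheta_{s}=v_{\varrho}\}$ is the unbounded $(n-1)$-manifold on which (\ref{vvd2}) holds because $v_{\varrho}\to h(n,\varrho)$. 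Note this same device is what legitimizes the $n=2$ case: your ``forces $\vartheta_{s}$ to be unbounded by comparison'' invokes a comparison principle on an unbounded domain, which is false in general for the minimal surface equation; the paper instead runs the $V$-set/uniqueness argument with the family of half-catenoids $v_{\varrho,s'}$, $s'<s$. Your Bernstein step for $\lambda\neq0$, $3\leq n\leq 6$ is essentially the paper's translation argument and is fine.
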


\begin{proof}
As mentioned in Remark \ref{R}, if $\lambda=0$ then $M\equiv\mathbb{R}^{n}$
and the result is already contemplate in Theorem 1 of \cite{ABR} for $n\geq3$
and in Theorem 1 of \cite{RTa} if $n=2$. The case $s=0$ is trivial. Assume
then $s>0$ and $\lambda\neq0$.\newline Let $\rho>0$ be such that $\partial
U\subset\mathfrak{B}_{\rho}\left(  0\right)  $, where $\mathfrak{B}_{\rho
}:=\mathfrak{B}_{\rho}\left(  0\right)  $ is the open geodesic ball of
$\mathbb{R}^{n}$, centered at origin of $\mathbb{R}^{n}$ and of radius $\rho$.
Let $\Lambda_{\rho}=M-\overline{\mathfrak{B}_{\rho}\left(  0\right)  }$. From
Proposition \ref{CA} , there is $v_{\rho}\in C^{\infty}\left(  \Lambda_{\rho
}\right)  $ solution of (\ref{PDM}) relatively to $\Lambda_{\rho}$, with
$v_{\rho}|_{\partial\Lambda_{\rho}}=0$, satisfying what is stated in
(\ref{hvr0}). Since%
\begin{equation}
\underset{d_{M}\left(  q,\partial\Lambda_{\rho}\right)  \rightarrow\infty
}{\lim}\left\vert \nabla v_{\rho}\left(  q\right)  \right\vert =0, \label{gt0}%
\end{equation}
we can choose $k>\rho$ such that
\begin{equation}
\left\vert \nabla v_{\rho}\right\vert _{\partial\mathfrak{B}_{k}\left(
0\right)  }\leq\frac{s}{2}, \label{vas}%
\end{equation}
where $\mathfrak{B}_{k}\left(  0\right)  $ is the open geodesic ball of
$\mathbb{R}^{n}$ centered at origin and of radius $k$. Let $U_{k}%
=\mathfrak{B}_{k}\left(  0\right)  \cap U$ and define%
\begin{equation}
T_{k}:=\left\{
\begin{array}
[c]{c}%
t\geq0\ ;\exists~w_{t}\in C^{2,\alpha}\left(  \overline{U}_{k}\right)
;\mathfrak{M}\left(  w_{t}\right)  =0\\
\sup_{\overline{U}_{k}}\left\vert \nabla w_{t}\right\vert \leq s,~w_{t}%
|_{\partial U}=0,w_{t}|_{\partial\mathfrak{B}_{k}\left(  0\right)  }=t
\end{array}
\right\}  . \label{Tk}%
\end{equation}
Note that the constant function $w_{0}\equiv0$ on $\overline{U}_{k}$ satisfies
all the condition in (\ref{Tk}), then $T_{k}\neq\emptyset$. Moreover, $\sup
T_{k}<\infty$ since
\[
\sup_{\overline{U}_{k}}\left\vert \nabla w_{t}\right\vert \leq s
\]
for all $t\in T_{k}$. Now, since the maximum principle and comparison
principle are applicable relatively to the operator $\mathfrak{M}$, we can use
the same approach used in the proof of Theorem 1 of \cite{ABR} to show that
$t_{k}:=\sup T_{k}\in T_{k}$,%
\[
\sup_{\overline{U}_{k}}\left\Vert \nabla w_{t_{k}}\right\Vert =s,\text{ }%
\sup_{\partial\mathfrak{B}_{k}}\left\vert \nabla w_{t_{k}}\right\vert \leq
s/2
\]
and, since $\partial U_{k}=\partial U\cup\partial\mathfrak{B}_{k}\left(
0\right)  $, to conclude that
\begin{equation}
\sup_{\overline{U}_{k}}\left\vert \nabla w_{t_{k}}\right\vert =\sup_{\partial
U}\left\vert \nabla w_{t_{k}}\right\vert =s\text{.} \label{wtk}%
\end{equation}
The proof of these facts follow essentially the same steps of the
aforementioned theorem (see p. 3067 and 3068 of \cite{ABR}), and then we will
not do it here. Now, taking $k\rightarrow\infty$ and from diagonal method, we
obtain a subsequence of $\left(  w_{t_{k}}\right)  $ which converges uniformly
in the $C^{2}$ norm in compact subsets of $\overline{U}$ to a function
$\vartheta_{s}\in C^{2,\alpha}\left(  \overline{U}\right)  $ satisfying
$\mathfrak{M}\left(  \vartheta_{s}\right)  =0$ in $U$, $\vartheta
_{s}|_{\partial U}=0$, which is non-negative and such that $\sup_{\overline
{U}}\left\vert \nabla\vartheta_{s}\right\vert =\sup_{\partial U}\left\vert
\nabla\vartheta_{s}\right\vert =s$. In particular, from regularity elliptic
PDE theory (\cite{GT}), we have $\vartheta_{s}\in C^{\infty}\left(  U\right)
$.\newline We will show now that if $n=2$ then $\vartheta_{s}$ is unbounded
and, if $n\geq3$, we have either (\ref{vvd}) or (\ref{vvd2}).\newline Let
$\varrho>0$ be the radius of the smallest open geodesic ball of $\mathbb{R}%
^{n}$ which contain $M-U$, centered at origin of $\mathbb{R}^{n}$ and denote
such ball by $\mathfrak{B}_{\varrho}\left(  0\right)  $. We have $\partial
U\subset\overline{\mathfrak{B}_{\varrho}\left(  0\right)  }$ and we can
conclude that $\partial U\cap\partial\mathfrak{B}_{\varrho}\neq\emptyset$. Let
$\Lambda_{\varrho}=M-\overline{\mathfrak{B}_{\varrho}\left(  0\right)  }$.
From Proposition \ref{CA} , there is $v_{\varrho}\in C^{\infty}\left(
\Lambda_{\varrho}\right)  $, $v_{\rho}|_{\partial\Lambda_{\varrho}}=0$,
solution of (\ref{PDM}), sat\nolinebreak isfying what is stated in
(\ref{hvr0}) and (\ref{hvr}) relatively to $\Lambda_{\varrho}$ if $n\geq3$.
Otherwise, if $n=2$, $v_{\varrho}$ is unbounded and satisfies the equalities
in (\ref{hvr0}).\newline Let $q_{0}\in\partial U\cap\partial\mathfrak{B}%
_{\varrho}$. Since
\[
\underset{d_{M}(q,\partial\Lambda_{\varrho})\rightarrow0}{\lim}\left\vert
\nabla v_{\varrho}\left(  q\right)  \right\vert =+\infty\text{, }%
\underset{\overline{U}}{\sup}\left\vert \nabla\vartheta_{s}\right\vert
=\underset{\partial U}{\sup}\left\vert \nabla\vartheta_{s}\right\vert
=s<+\infty
\]
and $v_{\varrho}\left(  q_{0}\right)  =\vartheta_{s}\left(  q_{0}\right)  =0$,
it follows that there is an open set $V_{q_{0}}$ in $U\cap\Lambda_{\varrho}$,
with $q_{0}\in\partial V_{q_{0}}$, such that $\vartheta_{s}<v_{\varrho}$ in
$V_{q_{0}}$. We claim that $V_{q_{0}}$ is unbounded. Suppose that $V_{q_{0}}$
is bounded. Since $\vartheta_{s}|_{\partial V_{q_{0}}}=v_{\varrho}|_{\partial
V_{q_{0}}}$, it follows that $\vartheta_{s}|_{\overline{V}_{q_{0}}}$ and
$v_{\varrho}|_{\overline{V}_{q_{0}}}$ are distinct solutions to the Dirichlet
problem%
\begin{equation}
\left\{
\begin{array}
[c]{c}%
\mathfrak{M}(f)=0\text{ in }V_{q_{0}}\text{, }f\in C^{2}\left(  V_{q_{0}%
}\right)  \cap C^{0}(\overline{V}_{q_{0}})\\
f|_{\partial V_{q_{0}}}=\vartheta_{s}|_{\partial V_{q_{0}}}%
\end{array}
\right.  , \label{DP3}%
\end{equation}
a contradiction, since a solution of (\ref{DP3}) is unique if the domain is
bounded. It follows that $V_{q_{0}}$ is unbounded. Note that we can have two
possibility for $\partial V_{q_{0}}$: either $\partial V_{q_{0}}$ is bounded
(in this case $V_{q_{0}}=\Lambda_{\varrho}$) or $\partial V_{q_{0}}$ is
unbounded (in this case, setting $\Sigma=\partial V_{q_{0}}$, we have
$\Sigma\subset\overline{U}\cap\overline{\Lambda}_{\varrho}$ a complete
$\left(  n-1\right)  $-dimensional manifold of $M$).\newline Assume first that
$n\geq3$. In this case $v_{\varrho}$ is bounded. If $\partial V_{q_{0}}$ is
bounded, as in this case $V_{q_{0}}=\Lambda_{\varrho}$, this means that
$\vartheta_{s}<v_{\varrho}$ on $\Lambda_{\varrho}$ and so we have (\ref{vvd}).
If $\partial V_{q_{0}}$ is unbounded, as $\vartheta_{s}|_{\partial V_{q_{0}}%
}=v_{\varrho}|_{\partial V_{q_{0}}}$, we conclude that%
\[
\underset{d\left(  q\right)  \rightarrow+\infty}{\lim}\vartheta_{s}|_{\partial
V_{q_{0}}}\left(  q\right)  =h\left(  n,\varrho\right)  .
\]
Assume now that $n=2$. Note first that on $\Lambda_{\varrho}\subset
\mathbb{R}^{2}$ it is well know that, for all $s>0$, there is a half catenoid
$v_{\varrho,s}\in C^{2}\left(  \overline{\Lambda}_{\varrho}\right)  $ which is
unbounded (logarithmic growth), which satisfies $v_{\varrho,s}|_{\partial
\Lambda_{\varrho}}=0$ and $\left\vert \overline{\nabla}v_{\varrho
,s}\right\vert =s$ on $\partial\Lambda_{\varrho}$ ($=\partial\mathfrak{B}%
_{\varrho}$). The same arguments used in Proposition \ref{CA} give us that
$v_{\varrho,s}$ is solution for the Dirichlet problem (\ref{PDM}) for zero
boundary data relatively to $\Lambda_{\varrho}$ and satisfies $\left\vert
\nabla v_{\varrho,s}\right\vert =s$ on $\partial\Lambda_{\varrho}$, since
$\mathfrak{B}_{\varrho}$ is centered at origin of $\mathbb{R}^{2}$. As
$\sup_{\partial U}\left\vert \nabla\vartheta_{s}\right\vert =s$ and $\partial
U\cap\partial\mathfrak{B}_{\varrho}\neq\emptyset$, there is $0<s^{\prime}<s$
such that $v_{\varrho,s^{\prime}}\in\nolinebreak C^{2}\left(  \overline
{\Lambda}_{\varrho}\right)  $ as described above satisfies $v_{\varrho
,s^{\prime}}<\vartheta_{s}$ in some open set $V\subset U\cap\Lambda_{\varrho}%
$. The same arguments used before to prove that $V_{q_{0}}$ is unbounded give
us that $V$ is unbounded and we see that if $\partial V$ is bounded then
$V=\Lambda_{\varrho}$. Thus, if $\partial V$ is bounded, we have
$v_{\varrho,s^{\prime}}<\vartheta_{s}$ on $\Lambda_{\varrho}$ and then,
$\vartheta_{s}$ is unbounded. If $\partial V$ is unbounded, as $v_{\varrho
,s^{\prime}}|_{\partial V}=\vartheta_{s}|_{\partial V},$ we have%
\[
\underset{d\left(  q\right)  \rightarrow+\infty}{\lim}\vartheta_{s}|_{\partial
V}\left(  q\right)  =+\infty
\]
since
\[
\underset{d\left(  q\right)  \rightarrow+\infty}{\lim}v_{\varrho,s^{\prime}%
}|_{\partial V}\left(  q\right)  =+\infty.
\]
and, therefore, $\vartheta_{s}$ is unbounded.\newline Now we will prove the
last affirmation of the proposition.\newline As $\vartheta_{s}\in C^{2,\alpha
}\left(  \overline{U}\right)  $ is a solution of the Dirichlet problem
(\ref{PDM}) with $\vartheta_{s}|_{\partial U}=0$, it follows from Proposition
3 of \cite{RT} that $u=\vartheta_{s}\circ\pi\in C^{2,\alpha}\left(
\overline{\Omega}\right)  $ satisfies $\mathcal{M}\left(  u\right)  =0$ in
$\overline{\Omega}$ with $u|_{\partial\Omega}=0$ and $\left\vert
\overline{\nabla}u\left(  p\right)  \right\vert =\left\vert \nabla\left(
\vartheta_{s}\circ\pi\right)  \left(  p\right)  \right\vert $, $p\in\Omega$,
where $\mathcal{M}$ is the operator defined in (\ref{PD}). Note that we have
necessarily $\Omega$ unbounded with $d_{E}\left(  p\right)  \rightarrow\infty$
if only if $d\left(  \pi\left(  p\right)  \right)  \rightarrow\infty$, where
$d_{E}$ is the Euclidean distance in $\mathbb{R}^{n+1}$ to $\partial\Omega$.
Suppose that
\[
\underset{d_{E}\left(  p\right)  \rightarrow\infty}{\lim}\left\vert
\overline{\nabla}u\left(  p\right)  \right\vert \neq0.
\]
Then there is $\varepsilon>0$ and a sequence $\left(  p_{n}\right)  $ in
$\Omega$, with $d_{E}\left(  p_{n}\right)  \rightarrow\infty$ when
$n\rightarrow\infty$ such that $\left\vert \overline{\nabla}u\left(
p_{n}\right)  \right\vert \geq\varepsilon$ for all $n$ large enough, $n\geq
n_{0}$. For each $n\in\mathbb{N}$, define
\[
\Omega_{n}=\left\{  p\in\mathbb{R}^{n+1};p+p_{n}\in\Omega\right\}
\]
and consider the sequence of functions $u_{n}:\Omega_{n}\subset\mathbb{R}%
^{n+1}\longrightarrow\mathbb{R}$ given by $u_{n}\left(  p\right)  =u\left(
p+p_{n}\right)  $. Note that $0\in\Omega_{n}$ for all $n$ since $\left(
p_{n}\right)  \subset\Omega$. Also%
\[
\mathbb{R}^{n+1}=\bigcup\limits_{n\in\mathbb{N}}\Omega_{n}.
\]
Indeed, given $w\in\mathbb{R}^{n+1}$, if the sequence $(w+p_{n})_{n}$ were
contained in $\mathbb{R}^{n+1}-\Omega$, since $\pi\left(  \mathbb{R}%
^{n+1}-\Omega\right)  $ is compact, we would have $d_{E}\left(  w+p_{n}%
\right)  \leq R$ for all $n$, for some $R>0$, a contradiction, since
$d_{E}\left(  p_{n}\right)  \rightarrow\infty$. It follows that, as
$u_{n}\left(  0\right)  =u\left(  p_{n}\right)  $, for all $n\geq n_{0}$ we
have $\left\vert \overline{\nabla}u_{n}\left(  0\right)  \right\vert
\geq\varepsilon$. Note that $\left(  u_{n}\right)  $ is uniformly bounded
since, by hypothesis, $n\geq3$ and $\vartheta_{s}\ $ is bounded. Then $\left(
u_{n}\right)  $ has a subsequence $\left(  u_{n_{k}}\right)  $ which converges
uniformly on compact subsets of $\mathbb{R}^{n+1}$ to a bounded function
$\widetilde{u}$ defined on the whole $\mathbb{R}^{n+1}$ which satisfies
$\mathcal{M}\left(  \widetilde{u}\right)  =0$. Assume that dimension of $M$
satisfies $3\leq n\leq6$. Then $\Omega\subset\mathbb{R}^{m}$, $4\leq m\leq7$.
From Bersntein Theorem extended to $\mathbb{R}^{m}$, $2\leq m\leq7$, by Simons
(\cite{S}) - which is false for $m\geq8$ (\cite{BGG}) - it follows that
$\widetilde{u}$ has to be constant. Therefore, we cannot have $\left\vert
\overline{\nabla}u_{n_{k}}\left(  0\right)  \right\vert \geq\varepsilon$ for
all $n_{k}\geq n_{0}$, a contradiction. Hence
\[
\underset{d_{E}\left(  p\right)  \rightarrow\infty}{\lim}\left\vert
\overline{\nabla}u\left(  p\right)  \right\vert =0.
\]
From Proposition 3 of \cite{RT} we have $\left\vert \overline{\nabla
}u\right\vert =\left\vert \nabla\vartheta_{s}\right\vert \circ\pi$ and,
therefore
\[
\underset{d\left(  q\right)  \rightarrow\infty}{\lim}\left\vert \nabla
\vartheta_{s}\left(  q\right)  \right\vert =0.
\]

\end{proof}

\begin{remark}
If $\lambda=0$ and $n=2$, it was proved in \cite{RTa} that, setting%
\begin{equation}
\vartheta_{\infty}\left(  p\right)  :=\underset{s\rightarrow\infty}{\lim
}\vartheta_{s}\left(  p\right)  ,p\in U\text{,} \label{vinf}%
\end{equation}
$\vartheta_{\infty}\in C^{2}\left(  U\right)  \cap C^{0}\left(  \overline
{U}\right)  $ and is an unbounded solution of the Dirichlet problem
(\ref{PDM}) with $\vartheta_{\infty}|_{\partial U}=0$, which satisfies
\[
\underset{d_{E}\left(  p\right)  \rightarrow\infty}{\lim}\left\vert
\overline{\nabla}u\left(  p\right)  \right\vert =0.
\]
If $\lambda=0$ and $n\geq3$, it was proved in \cite{ABR} that $\vartheta
_{\infty}$, as defined in (\ref{vinf}), is in $C^{2}\left(  U\right)  $, is a
bounded solution of the Dirichlet problem (\ref{PDM}) and its graph is
contained in a $C^{1,1}$-manifold $\Upsilon\subset\overline{U}\times
\mathbb{R}$ such that $\partial\Upsilon=\partial\Omega$.
\end{remark}

\begin{proposition}
\label{CRP}Let $G,$ $\pi$ and $M=(\mathbb{R}^{n},\left\langle ,\right\rangle
_{G})$, $n\geq3$, as defined in (\ref{G}), (\ref{hp}) and (\ref{M}),
respectively. Let $U$ be an exterior $C^{2,\alpha}$-domain in $M$,
$U=\nolinebreak\pi\left(  \Omega\right)  $, where $\Omega\subset
\mathbb{R}^{n+1}$ is a $G$-invariant $C^{2,\alpha}$-domain and let
$d=d_{M}\left(  .,\partial U\right)  $. Assume that $M-U$ satisfies the
interior sphere condition of radius $r>0$. Let $%
\mathcal{L}%
=%
\mathcal{L}%
\left(  r,n,\lambda\,a\right)  $ be as given in (\ref{L}), where $\lambda$ and
$a$ are given in the definition of $G$. Then, given $c\in\lbrack0,%
\mathcal{L}%
]$, there is $w_{c}\in C^{2}\left(  U\right)  \cap C^{0}\left(  \overline
{U}\right)  $ solution of the Dirichlet problem (\ref{PDM}) relatively to $U$,
with $w_{c}|_{\partial U}=0$, which satisfies
\[
\underset{d\left(  q\right)  \rightarrow\infty}{\lim w_{c}\left(  q\right)
}=c.
\]
In particular, if $c\in\lbrack0,%
\mathcal{L}%
)$, then $w_{c}\in C^{2,\alpha}\left(  \overline{U}\right)  $.
\end{proposition}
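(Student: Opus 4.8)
The plan is to reduce to the case $\lambda\neq0$ and $0<c\leq\mathcal{L}$, since $\lambda=0$ (where $M\equiv\mathbb{R}^{n}$ and $\mathcal{L}=h(n,r)$) is covered by Remark \ref{R}, Proposition \ref{CA} and \cite{ABR}, and $c=0$ is trivial with $w_{0}\equiv0$. Fix $\varrho>0$ with $M-U\subset\mathfrak{B}_{\varrho}\left(0\right)$, so that $\Lambda_{\varrho}=M-\overline{\mathfrak{B}_{\varrho}\left(0\right)}\subset U$, and exhaust $U$ by the bounded domains $U_{k}=U\cap\mathfrak{B}_{k}\left(0\right)$, $k>\varrho$, whose boundary splits as $\partial U_{k}=\partial U\sqcup\partial\mathfrak{B}_{k}\left(0\right)$. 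On each $U_{k}$ I would solve the Dirichlet problem for $\mathfrak{M}$ with data $0$ on $\partial U$ and the constant $c$ on $\partial\mathfrak{B}_{k}\left(0\right)$.

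For solvability on $U_{k}$ I would run the continuity method for $\mathfrak{M}$. The height bound $0\leq w^{k}\leq c$ is immediate from the maximum principle. The boundary gradient estimate on $\partial\mathfrak{B}_{k}\left(0\right)$ comes from the mean convexity of the geodesic sphere, while the estimate on $\partial U$ is furnished by the interior sphere condition: for $q\in\partial U$ let $S_{q}\subset M-U$ be the tangent sphere of radius $r$ and centre $o_{q}$; since $U\subset M-\overline{B_{r}(o_{q})}$, the radial barrier of Corollary \ref{mir} centred at $o_{q}$ (for $c=\mathcal{L}$), or a finite–slope supersolution of the type of Proposition \ref{P_ginf} levelling at height $c$ (for $c<\mathcal{L}$), dominates $w^{k}$ near $q$, forces $w^{k}|_{\partial U}=0$, and its slope at $q$ bounds $\left\vert \nabla w^{k}\right\vert$ there, finitely when $c<\mathcal{L}$. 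Together with the interior gradient estimate for $\mathfrak{M}$ (the term $\left\langle \nabla v,J\right\rangle_{M}$ being bounded by Lemma \ref{CGp} and (\ref{sHG})) this gives a global $C^{1}$ bound, hence $w^{k}\in C^{2,\alpha}\left(\overline{U}_{k}\right)$. Comparison on $U_{k}$ yields $w^{k'}\leq w^{k}$ for $k'>k$, so $w^{k}\searrow w_{c}$; the uniform local estimates give $C^{2}_{loc}(U)$ convergence, so $w_{c}$ solves $\mathfrak{M}\left(w_{c}\right)=0$, $w_{c}|_{\partial U}=0$, $0\leq w_{c}\leq c$, with $w_{c}\in C^{2,\alpha}\left(\overline{U}\right)$ if $c<\mathcal{L}$ and $w_{c}\in C^{0}\left(\overline{U}\right)$ if $c=\mathcal{L}$, the boundary regularity being read off from the barrier.

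It remains to identify the limit at infinity. The bound $w_{c}\leq c$ gives $\limsup_{d\left(q\right)\rightarrow\infty}w_{c}\leq c$. For the reverse inequality I would use a radial solution on the exterior of a geodesic ball: by the radial ODE analysis underlying Proposition \ref{P_ginf} and Corollary \ref{mir}, on $\Lambda_{\varrho}$ there is a radial solution $\phi$ of $\mathfrak{M}\left(\phi\right)=0$ with $\phi|_{\partial\mathfrak{B}_{\varrho}\left(0\right)}=0$ and $\lim_{d\left(q\right)\rightarrow\infty}\phi\left(q\right)=c$. Indeed the boundary slope parametrises these radial solutions monotonically and their limit at infinity sweeps $[0,\mathcal{L}\left(\varrho,n,\lambda,a\right)]$, with the supremum realised by Corollary \ref{mir}; moreover $\mathcal{L}\left(\varrho,n,\lambda,a\right)\geq\mathcal{L}\left(r,n,\lambda,a\right)=\mathcal{L}\geq c$, since $r\leq\varrho$ and, by (\ref{C})–(\ref{L}), $\mathcal{L}$ is non‑decreasing in the radius. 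Comparing $\phi$ and $w^{k}$ on the annulus $A_{\varrho,k}=\mathfrak{B}_{k}\left(0\right)-\overline{\mathfrak{B}_{\varrho}\left(0\right)}\subset U$, where $\phi=0\leq w^{k}$ on $\partial\mathfrak{B}_{\varrho}\left(0\right)$ and $\phi\leq c=w^{k}$ on $\partial\mathfrak{B}_{k}\left(0\right)$, gives $\phi\leq w^{k}$ there; letting $k\rightarrow\infty$ yields $\phi\leq w_{c}$ on $\Lambda_{\varrho}$, so $\liminf_{d\left(q\right)\rightarrow\infty}w_{c}\geq c$. Hence $\lim_{d\left(q\right)\rightarrow\infty}w_{c}=c$, and transporting $w_{c}$ to $\Omega$ by $u=w_{c}\circ\pi$ via Proposition 3 of \cite{RT} finishes the proof.

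The main obstacle I expect is the radial analysis on the curved manifold $M$ that produces $\phi$ with limit exactly $c$: establishing global existence of the radial profiles, showing that their limit at infinity depends continuously and monotonically on the boundary slope, and that the range is precisely $[0,\mathcal{L}\left(\varrho,n,\lambda,a\right)]$ with supremum attained by the barrier of Corollary \ref{mir}; one must also reconcile the two radii, $r$ governing the boundary regularity and the threshold $c=\mathcal{L}$, and $\varrho$ governing the behaviour at infinity. The comparisons are with functions on unbounded domains, but this is circumvented by carrying them out on the annuli $A_{\varrho,k}$ and passing to the limit, so that no maximum principle at infinity is required.
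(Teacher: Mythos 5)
Your overall architecture (exhaustion by $U_k$, barriers at $\partial U$ from the interior sphere condition, monotone limit) is workable and differs from the paper, which instead runs the Perron method on the family of subsolutions $f$ with $f|_{\partial U}=0$ and $\limsup_{d(q)\to\infty}f(q)\leq c$; that choice sidesteps the solvability question on each $U_k$ that your route has to address. The boundary-regularity part of your argument (infinite-slope barrier from Corollary \ref{mir} for $c=\mathcal{L}$, a finite-slope translated barrier for $c<\mathcal{L}$) matches the paper's in substance. The problem is the step you yourself flag as the main obstacle: it is a genuine gap, and the tool you propose for it cannot work.

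You need a \emph{lower} barrier with limit exactly $c$ at infinity, i.e.\ a subsolution or exact solution $\phi$ with $\mathfrak{M}(\phi)\geq 0$, $\phi\leq w^k$ on the relevant boundaries, and $\phi\to c$. But "the radial ODE analysis underlying Proposition \ref{P_ginf} and Corollary \ref{mir}" produces only functions with $\mathfrak{M}(\psi\circ d)\leq 0$: since $M$ is not rotationally symmetric about a general point when $\lambda\neq 0$, the quantities $\Delta d$ and $\langle\nabla d,J\rangle_M$ are not functions of $d$ alone, and the construction there only exploits the one-sided bounds $\Delta d\leq (n-1)/(d+r)$ and $\langle\nabla d,J\rangle_M\geq -|\lambda|/2|a|$. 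So no family of exact radial solutions "parametrised by boundary slope with limits sweeping $[0,\mathcal{L}(\varrho,n,\lambda,a)]$" is available — $\mathcal{L}$ is the height of a supersolution with infinite boundary slope, not the supremum of limits of solutions — and a supersolution is the wrong-signed object for the inequality $\liminf w_c\geq c$ you want. The paper gets the lower bound from Proposition \ref{CA} instead: the Euclidean half-catenoids $v_\alpha$ centered at the origin of $\mathbb{R}^n$ \emph{are} exact solutions of $\mathfrak{M}=0$ on $M-\overline{\mathfrak{B}_\alpha(0)}$ (their Euclidean graphs are $G$-invariant minimal graphs), so $\max\{0,\,v_\alpha-(v_\alpha(\infty)-c)\}$, extended by $0$, is a generalized subsolution vanishing on $\partial U$ with limit exactly $c$ at infinity, and choosing $\alpha$ large makes $v_\alpha(\infty)=\alpha h(n,1)>c$ for any $c\leq\mathcal{L}$. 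If you replace your $\phi$ by this truncated, vertically translated catenoid (and compare on your annuli $A_{\varrho,k}$ exactly as you describe), your argument closes; as written, the existence of $\phi$ is unsupported and the cited machinery cannot supply it.
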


\begin{proof}
If $\lambda=0$ the result is already contemplate in Theorem 1 of \cite{ABR}.
Assume $\lambda\neq0$. Given $c\in\lbrack0,%
\mathcal{L}%
)$, define
\begin{equation}
\digamma=\left\{
\begin{array}
[c]{c}%
f\in C^{0}\left(  \overline{U}\right)  ;\text{ }f\text{ is subsolution
relative to }\mathfrak{M}\\
f|_{\partial U}=0\text{ and }\underset{d\left(  q\right)  \rightarrow\infty
}{\lim}\sup f\left(  q\right)  \leq c
\end{array}
\right\}  . \label{per}%
\end{equation}
Note that $\digamma\neq\emptyset$ since $f_{0}\equiv0\in$ $\digamma$. From
comparison principle we have $f\leq c$ for all $f\in\digamma$. From Perron
method applied relatively to operator $\mathfrak{M}$ (\cite{GT} - Section 2.8,
\cite{RT} - Section 3), we conclude that
\[
w_{c}\left(  q\right)  :=\sup\left\{  f\left(  q\right)  ;\text{ }f\in
\digamma\right\}  \text{, }q\in\overline{U},
\]
is in $C^{\infty}\left(  U\right)  $ and satisfies$\mathcal{\ }\mathfrak{M}%
\left(  w_{c}\right)  =0$ in $U$. We will show now that
\begin{equation}
\lim_{d\left(  q\right)  \rightarrow\infty}w_{c}\left(  q\right)  =c.
\label{wc}%
\end{equation}
Consider $\alpha>0$ such that $\mathfrak{B}_{\alpha}\left(  0\right)  $, the
geodesic ball of $\mathbb{R}^{n}$ center at origin of $\mathbb{R}^{n}$, of
radius $\alpha>0$, contain $M-U$ and be such that $v_{\alpha}\left(
\infty\right)  >c$, where $v_{\alpha}$ is as defined in (\ref{ncat}) (note
that $v_{\alpha}\left(  \infty\right)  =\alpha h\left(  n,1\right)  $ and
$h\left(  n,1\right)  >0$). Define now $f\in C^{0}\left(  \overline{U}\right)
$ by%
\[
f\left(  q\right)  =\left\{
\begin{array}
[c]{l}%
0\text{ if }q\in\overline{U}\cap\mathfrak{B}_{\alpha}\left(  0\right) \\
\max\{0,v_{\alpha}\left(  q\right)  -\left(  v_{\alpha}\left(  \infty\right)
-c\right)  \}\text{, if }q\in M-\mathfrak{B}_{\alpha}\left(  0\right)  .
\end{array}
\right.
\]
From Proposition \ref{CA} we have $f$ a non-negative (generalized) subsolution
to the Dirichlet problem (\ref{PDM}) (for zero boundary data) relatively to
$U$, which satisfies
\begin{equation}
\underset{d\left(  q\right)  \rightarrow\infty}{\lim}f\left(  q\right)  =c.
\label{win}%
\end{equation}
It follows that $f\in\digamma$ and that $f\leq w_{c}\leq c$. Then we have
(\ref{wc}).\newline Given $q_{0}\in\partial U$, by hypothesis there is a
geodesic open ball of $M$, say $B_{r}$, of radius $r>0$, contained in $M-U$
and such that $\partial B_{r}$ is tangent to $\partial U$ ($=\partial\left(
M-U\right)  $) at $q_{0}$. From Corollary \ref{mir}, there is $W\in
C^{0}\left(  M-B_{r}\right)  $ a (gen\nolinebreak eralized) supersolution
relatively to the operator $\mathfrak{M}$ on $M-B_{r}$ such that $c\leq
W\left(  \infty\right)  =%
\mathcal{L}%
$, with $W|_{\partial B_{r}}=0$, which is $C^{1}$ in a neighborhood of
$\partial B_{r}$ in $M-B_{r}$ and such that
\[
\underset{d_{M}\left(  q,\partial B_{r}\right)  \rightarrow0}{\lim}\left\vert
\nabla W\left(  q\right)  \right\vert =+\infty.
\]
From the comparison principle, since $\overline{U}\subset M-B_{r}$, it follows
that on $\overline{U}$, we have $0\leq w_{c}\leq W$. As $q_{0}$ is arbitrary,
we conclude that $w_{c}\in C^{0}\left(  \overline{U}\right)  $ with
$w_{c}|_{\partial U}=0$.\newline Assume that $0\leq c<%
\mathcal{L}%
$. Let $\delta=\left(  c+%
\mathcal{L}%
\right)  /2$. Let $L\left(  \sigma\right)  :=%
\mathcal{L}%
\left(  \sigma,n,\lambda,a\right)  $, $\sigma\in(0,r]$, where $%
\mathcal{L}%
$ is given by (\ref{L}). Since $L\in C^{0}(0,r]$, either there is $\sigma
_{0}\in(0,r)$ such that $L\left(  \sigma_{0}\right)  =\delta$, or
$\delta<L(\sigma)$ for all $\sigma\in\left(  0,r\right)  $. Take $r^{\prime
}=\sigma_{0}$ in the first case and $r^{\prime}$ any point in $(0,r)$ in the
second case. Let $B_{r^{\prime}}$ be the open geodesic ball of $M$,
$B_{r^{\prime}}\subset B_{r}$, with the same center of $B_{r}$. Consider the
correspondent $t_{0}>0$ and the function $w\in C^{2}\left(  \Lambda
_{0}\right)  \cap C^{0}\left(  \overline{\Lambda}_{0}\right)  $, $w|_{\partial
B_{r^{\prime}}}=0$, where $\Lambda_{0}=\left\{  q\in M-B_{r^{\prime}}%
;d_{M}\left(  q,\partial B_{r^{\prime}}\right)  \leq t_{0}\right\}  $ is such
that $\mathfrak{M}\left(  w\right)  \leq0$, as given in Corollary \ref{mir}.
If the second case occurs, the height of this $w$ is greater than $\delta$ at
its correspondent distance $t_{0}$ to $\partial B_{r^{\prime}}$ and, as $w$ is
radial with respect to the center of $B_{r^{\prime}}$, there is $t_{0}%
^{\prime}<t_{0}$ such that, at the distance $t_{0}^{\prime}$ of $\partial
B_{r^{\prime}}$, the height of $w$ is $\delta$. In any case, there is
$0<t_{0}^{\prime}\leq t_{0}$ such that, setting
\[
W_{r^{\prime}}\left(  q\right)  =\left\{
\begin{array}
[c]{c}%
w\left(  q\right)  \text{ if }q\in\Lambda_{0}^{\prime}\\
\delta\text{ if }q\in M-\Lambda_{0}^{\prime}%
\end{array}
\right.
\]
where
\[
\Lambda_{0}^{\prime}:=\left\{  q\in\Lambda;d\left(  q\right)  \leq
t_{0}^{\prime}\right\}  .
\]
satisfies the same properties of the function $W$ as given in Corollary
\ref{mir}. Note that it is possible to translate the graph of $W_{r^{\prime}}$
in the $\partial/\partial t$-direction ($e_{k}$-direction) in a way that its
height at infinity is in $[c,\delta)$ and such that $\Gamma$, the intersection
of the hypersurface resulting of this displacement with $\left\{
t\geq0\right\}  $ is such that
\[
\partial\Gamma=\Gamma\cap B_{r}=\partial B_{r^{\prime\prime}},
\]
with $B_{r^{\prime\prime}}\subset B_{r}$ a geodesic open ball of $M$ with the
same center of $B_{r}$ and radius $r^{\prime\prime}$, being $r^{\prime
}<r^{\prime\prime}<r$.\ Now, move $\Gamma$ keeping $\partial\Gamma$ on $M$ and
the center of $\partial\Gamma$ on the geodesic of $M$ linking the center of
$B_{r}$ to $q_{0}\in\partial U$, until $\partial\Gamma$ touches $\partial U$
at $q_{0}$ and call $\widetilde{\Gamma}$ this final hypersurface. Observe that
such displacement is an isometry in $M\times\mathbb{R}$. Denote by
$\widetilde{B}_{r^{\prime\prime}}$ the geodesic ball contained in $B_{r}$ of
radius $r^{\prime\prime}$ such that $\partial\widetilde{B}_{r^{\prime\prime}%
}=\partial\widetilde{\Gamma}$. We have then that $W_{r^{\prime\prime}%
}:M-\widetilde{B}_{r^{\prime\prime}}\longrightarrow\mathbb{R}$ is a
(generalized) supersolution relatively to $\mathfrak{M}$. Moreover, since our
translation in $\partial/\partial t$ direction is small enough, $W_{r^{\prime
\prime}}$ satisfies $\mathfrak{M}\left(  W_{r^{\prime\prime}}\right)  =0$ in
$\overline{\Lambda}_{0}^{\prime\prime}$, where $\Lambda_{0}^{\prime\prime}$ is
a neighborhood in $U$ such that $q_{0}\in\partial\Lambda_{0}^{\prime\prime}$.
In particular, $W_{r^{\prime\prime}}\in C^{\infty}\left(  \overline{\Lambda
}_{0}^{\prime\prime}\right)  $. From comparison principle, since $0\leq
w_{c}|_{\partial U}\leq W_{r^{\prime\prime}}|_{\partial U}$ and $c\leq
W_{r^{\prime\prime}}\left(  \infty\right)  $, we conclude that $0\leq
w_{c}\leq W_{r^{\prime\prime}}$ on $\overline{U}$. As $w_{c}\left(
q_{0}\right)  =W_{r^{\prime\prime}}\left(  q_{0}\right)  \ $and $W_{r^{\prime
\prime}}\in\nolinebreak C^{\infty}\left(  \overline{\Lambda}_{0}^{\prime
\prime}\right)  $ it follows that $w_{c}\in C^{1}\left(  \overline{U}\right)
$ and, from elliptic PDE regularity theory ( \cite{GT}), it follows that
$w_{c}\in C^{2,\alpha}\left(  \overline{U}\right)  \cap C^{\infty}\left(
U\right)  $.
\end{proof}

{\small Ari Jo\~{a}o Aiolfi }

{\small Departamento de Matem\'{a}tica }

{\small Universidade Federal de Santa Maria }

{\small Santa Maria RS/Brazil }

{\small ari.aiolfi@ufsm.br }

{\small ................................ }

{\small Caroline Maria Assmann }

{\small Instituto de Matem\'{a}tica }

{\small Universidade Federal do Rio Grande do Sul }

{\small Porto Alegre RS/Brazil }

{\small carol.assmann0504@gmail.com }

{\small ................................ }

{\small Jaime Bruck Ripoll }

{\small Instituto de Matem\'{a}tica }

{\small Universidade Federal do Rio Grande do Sul }

{\small Porto Alegre RS/Brazil }

{\small jaime.ripoll@ufrgs.br}
\end{document}